\def\@swap#1#2{\let\@tempa#1\let#1#2\let#2\@tempa}
\newcommand{\RR}{\mathbb{R}}
\newcommand{\RRa}{\mathbb{R}^\ast}
\newcommand{\GG}{\mathscr{G}}
\newcommand{\as}{a.s.\thinspace}
\newcommand{\abs}[1]{\left\lvert #1 \right\rvert}
\DeclareMathOperator{\rRe}{Re}
\newcommand{\iu}{\mathrm{i}} 
\newenvironment{eqnarr*}{\begin{IEEEeqnarray*}{rCl}}{\end{IEEEeqnarray*}\ignorespacesafterend}
\renewcommand{\eqref}[1]{\hyperref[#1]{(\ref*{#1})}}
\numberwithin{equation}{section}
\newcommand{\GGt}{(\GG_t)_{t \ge 0}}
\newcommand{\stP}{\mathrm{P}}
\newcommand{\stPs}{\mathrm{P}^\sharp}
\newcommand{\stPhs}{\mathrm{\hat P}^\sharp}
\newcommand{\stPz}{\stP^0}
\newcommand{\stE}{\mathrm{E}}
\newcommand{\stEh}{\mathrm{\hat E}}
\newcommand{\stEhs}{\mathrm{\hat E}^\sharp}
\newcommand{\MAPP}{\mathbb{P}}
\newcommand{\MAPE}{\mathbb{E}}
\newcommand{\MAPPs}{\mathbb{P}^\sharp}
\newcommand{\MAPPh}{\widehat{\mathbb{P}}}
\newcommand{\MAPPhs}{\widehat{\mathbb{P}}^\sharp}
\newcommand{\MAPEhs}{\widehat{\mathbb{E}}^\sharp}
\DeclareMathOperator{\diag}{diag}
\newcommand{\Had}{\circ}
\DeclareMathOperator{\sgn}{sgn}
\newcommand{\dd}{\mathrm{d}}
\newcommand{\rhoh}{\hat{\rho}}
\newcommand{\for}{\qquad}
\newcommand{\define}{\emph}
\newcommand{\Mn}{M}
\newcommand*{\MAPPc}[1]{\MAPP^{(#1)}}
\renewcommand{\abs}[1]{\lvert #1 \rvert} 
    \def\beq{\begin{eqnarray}}
    \def\eeq{\end{eqnarray}}
    \def\beqq{\begin{eqnarray*}}
    \def\eeqq{\end{eqnarray*}}
    \def\stP{\textbf{P}}
    \def\ud{\mathrm{d}}
\newcommand*{\pref}[1]{\hyperref[#1]{(\ref*{#1})}}
\newcommand*{\refpref}[2]{\hyperref[#2]{\ref*{#1}(\ref*{#2})}}
\theoremstyle{plain}
  \newtheorem{dummy}{***}[section]
  \newtheorem{thm}[dummy]{Theorem}
  \newtheorem{lem}[dummy]{Lemma}
\theoremstyle{definition}
  \newtheorem{assn}[dummy]{Assumption}
\theoremstyle{remark}
\numberwithin{equation}{section}
\newcommand{\email}[1]{\href{mailto:#1}{\nolinkurl{#1}}}
\title{Recurrent extensions of real-valued self-similar Markov processes.}
\author{\large%
H. Pant\'i%
  \footnote{
  \textsc{ Facultad de Matem\'aticas, Universidad Aut\'onoma de Yucat\'an. Anillo PerifŽrico Norte, Tablaje Cat. 13615, Colonia Chuburn‡ Hidalgo Inn, MŽrida Yucat‡\'an.}
  E-mail: \email{henry.panti@correo.uady.mx}}
\ \   J.\ C.\ Pardo%
  \footnote{
  \textsc{ Centro de Investigaci\'on en Matem\'aticas. Calle Jalisco s/n. C.P. 36240, Guanajuato, Gto., Mexico.}
  E-mail: \email{jcpardo@cimat.mx}}
\ \   V.\ M.\ Rivero%
  \footnote{
  \textsc{  Centro de Investigaci\'on en Matem\'aticas. Calle Jalisco s/n. C.P. 36240, Guanajuato, Gto., Mexico.}
  E-mail: \email{rivero@cimat.mx}}
}
\date{\today}
\begin{document}

\maketitle

\begin{abstract}
\textbf{ Abstract.} Let $X=(X_t, t\ge 0)$ be a self-similar Markov process taking values in $\mathbb{R}$ such that the state 0 is a trap.
 In this paper, we present a necessary and sufficient condition for the existence of a self-similar recurrent extension of $X$ that leaves 0 continuously. The condition is expressed in terms of the associated Markov additive process via the Lamperti-Kiu representation. Our results extend those of   Fitzsimmons \cite{Fitzsimmons2006} and Rivero (\cite{Rivero2005}, \cite{Rivero2007})  where  the existence and uniqueness of a recurrent extension for positive self similar Markov processes were treated. In particular, we describe the recurrent extension of a stable L\'evy process which to the best of  our knowledge  has not been studied before.

{\small
\medskip\noindent
\textit{AMS 2000 subject classifications:} 60G52, 60G18, 60G51.

\medskip\noindent
\textit{Keywords and phrases:}
real self-similar Markov processes,  stable processes, Markov additive processes,
Lamperti--Kiu representation, exponential functional.
}
\end{abstract}

\section{Introduction and main results}
In his seminal work \cite{Lamperti1972}, Lamperti studied the structure of positive self-similar Markov processes (pssMp) and  posed the problem of determining those  pssMp that agree  with a given pssMp up to the time the latter process first  hits 0. Lamperti \cite{Lamperti1972}  answered this question in the special case of  Brownian motion killed at 0 and  he found that the class of those extensions which are self-similar consists of the reflecting and absorbing Brownian motions and the extensions which immediately after reaching 0 jump according to the measure $\mathrm{d} x/x^{\beta+1}$, $\beta\in (0,1)$. Voulle-Apiala \cite{Vuolle1994} used It\^o's excursion theory to study the general case and provided a  sufficient condition on the resolvent of pssMp for the existence of recurrent extensions that leave $0$ continuously.  The main contribution of Voulle-Apiala  to this problem consist on the existence of a unique entrance law under which there exists a unique recurrent self-similar Markov process which turns out to be an extension of the pssMp after it reaches 0. Motivated by Voulle-Apiala's result, Rivero \cite{Rivero2005} provided  a simpler sufficient condition for the existence of such recurrent extension and a more explicit description of the entrance law. The  sufficient condition found by  Rivero was determined in terms of the underlying L\'evy process in the so-called Lamperti's transform of pssMp. Motivated by the aforementioned studies, Fitzsimmons  \cite{Fitzsimmons2006} and Rivero \cite{Rivero2007} provided, independently, a necessary and sufficient condition  for the existence of recurrent extensions that leave $0$ continuously. To be more precisely, their main result can be stated as follows: \textit{A pssMp that hits 0 in a finite time admits a self-similar recurrent extension that leaves 0 continuously if and only if the L\'evy process in the Lamperti transformation satisfies the so-called Cram\'er's condition.}

Recently, Chaumont et al. \cite{CPR2013}  studied the structure of real valued self-similar Markov processes and  established a  Lamperti type representation for such class of processes up to their first hitting time of $0$ in terms of  Markov additive processes (MAP).  Hence it is natural to pose the same question of Lamperti for such class of processes, in other words our aim is  to determine those  real valued self-similar Markov processes that agree  with a given real valued self-similar Markov process (rssMp) up to the time the latter process first  hits 0.   In particular, we would like to describe   the recurrent extension of a stable L\'evy process with scaling parameter $\alpha\in (1,2)$ up to its  first hitting time of $0$, which to the best of  our knowledge  has not been studied before.

Our arguments follow a similar strategy as in  \cite{Rivero2007}, nevertheless the construction of the recurrent extension of a real valued self-similar Markov processes is not straightforward and requires a careful analysis. Indeed, some fluctuation properties of MAPs and real valued self-similar Markov processes are required in order to guarantee the existence  of the excursion measure as well as its characterization. For instance, a complete understanding of eigenfunctions of a MAP, and of the moments of  exponential functionals of such processes is needed in terms of their characteristics, since they  are strongly related with the description of the entrance law of the recurrent extension. We conjecture this strategy can also be applied for the $d$-dimensional case ($d\ge 2$),  where a Lamperti type representation has been obtained recently by Alili et al. \cite{ACGZ2016}, but it seems that a much deeper analysis and a good understanding of processes behind the Lamperti transform is required, as well as the description of its entrance law,  which according to Kyprianou, Rivero, Sengul and Yang \cite{KypYang}  is complicated due to the fact that the driving part in a MAP can be essentially any Markov process taking values in the $d$-dimensional sphere, and hence many technical assumptions would be needed.
 
To state our results precisely, we introduce some notation and recall some of the basic theory of real self-similar Markov processes. Let $\mathbb{D}$ be the space of c\`adl\`ag paths defined on $[0,\infty)$ with values in $\RR$, endowed with the Skorohod topology and the corresponding Borel $\sigma$-field $\mathcal{D}$. A family of distributions $(\mathbf{P}_x,  x\in \mathbb{R})$ on $(\mathbb{D}, \mathcal{D})$ is called strong Markov family on $\mathbb{R}$ if the canonical process $(X_t, t\ge 0)$ is a standard Markov process (in the sense of Blumenthal and Getoor \cite{BG1968}) with respect to  $(\mathcal{F}_t)_{t\ge 0}$,  the canonical right continuous filtration.  If additionally the  process satisfies the so-called
\define{scaling property}: for all $c>0$,  
\begin{equation} \label{eq: self-similarity}
\{(c X_{t c^{-\alpha}}, t \ge 0), \stP_x \} \overset{\text{Law}}{=} \{( X_{t }, t \ge 0), \stP_{cx} \}, \qquad\textrm{for}\quad \, x\in \RR,
\end{equation}
then, the process is called  \define{real self-similar Markov process} (rssMp). We denote by $T_0$, the first hitting time of  0 for the process $X$, i.e. 
\[
T_0=\inf\{t>0:X_t=0\},
\] and we will  assume $T_0<\infty$,  $\stP_x$-a.s. and then the process dies  i.e.  $0$ is a \define{cemetery point},  for all $x\in\RR$.

A crucial point in our arguments is the following time change representation of rssMp, due to Chaumont et al. \cite{CPR2013}, in terms of  a Markov additive process taking values in $\RR \times \{ -1, 1\}$, here denoted by $(\xi, J)$.  For simplicity, we write $\{\pm 1\}:= \{ -1, 1\}$ and set $\mathbb{R}^*:=\mathbb{R}\setminus\{0\}$. The so-called Lamperti-Kiu representation can be stated as follows:  let $x\in \mathbb{R}^*$ then, under $\stP_x$, the  rssMp 
$X$ can be represented as follows
\[
X_t\mathbf{1}_{\{t<T_0\}} = x \exp\Big\{\xi(\tau(|x|^{-\alpha}t))\Big\}J({\tau(|x|^{-\alpha}t)}),\qquad  t\ge 0,
\]
where
\[
\tau(t)=\inf\left\{s\ge 0: \int_0^s \exp\Big\{\alpha \xi(u)\Big\} \mathrm{d} u\ge t\right\}.
\]
Let  $\GGt$ be a standard
filtration. Recall that a \define{Markov additive process}, $(\xi,J),$ tacking values in $\RR  \times \{\pm 1\},$ with respect to $\GGt,$,  if it is is a  c\`adl\`ag process, $(J(t), t \ge 0)$ is a  two states continuous-time Markov chain  and
the following property is satisfied:
\textit{ for any $i \in \{\pm 1\}$, $s,t \ge 0$: 
given $\{J(t) = i\}$,
the pair $(\xi(t+s)-\xi(t), J(t+s))$ is independent of 
$\GG_t$ and has the same distribution as $(\xi(s)-\xi(0), J(s))$
given $\{J(0) = i\}$.} 

If the MAP is killed, then $\xi$ shall be set to $-\infty$. We let $\MAPP_{z,i}$ be the law of $(\xi,J)$ started from the state $(z,i)$, and if $\mu$ is a probability distribution on $\{\pm 1\}$, we write
\[
\MAPP_{z,\mu} (\cdot)=  \sum_{i \in \{\pm 1\}} \mu(i) \MAPP_{z,i}(\cdot).
  \] 
We adopt a similar convention for expectations. It is well-known that a MAP $(\xi,J)$  can also be described in the following way (see for instance Asmussen \cite[\S XI.2a]{Asmussen2003} and Ivanovs \cite[Proposition 2.5]{Iva-thesis}):
 for $i,j\in \{\pm 1\}$, 
  there exists a sequence of iid L\'evy processes
  $(\xi_i^n)_{n \ge 0}$ and  a sequence of iid random variables
  $(U_{ij}^n)_{n\ge 0}$, independent
  of the chain $J$, such that if $S_0 = 0$
  and $(S_n)_{n \ge 1}$ are the
  jump times of $J$, the process $\xi$ has the representation
  \[ \xi(t) =\begin{cases}
  \xi(S_n -) + U_{J(S_n-), J(S_n)}^n + \xi_{J(S_n)}^n(t-S_n), & \text{if }\quad  t\in[S_n,S_{n+1}), t<\mathbf{k},\\
  -\infty, &\text{if }\quad t\ge \mathbf{k},
  \end{cases}
 \]
where the killing time $\mathbf{k}$ is the first time one of the appearing L\'evy processes is killed. Roughly speaking the behaviour of a MAP can be described as follows: if $J$ is in state $1$, then $\xi$ evolves according to a copy of $\xi_1$, a  L\'evy process. Once $J$ changes from $1$ to $-1$, which happens at rate $q_{1,-1}$, $\xi$ has an additional transitional jump and until the next jump of $J$, $\xi$ evolves according to a copy of $\xi_{-1}$. The MAP is killed as soon as one of the L\'evy processes is killed. 
Consequently, the mechanism behind the Lamperti-Kiu representation is simple: the Markov chain $J$
governs the sign
of the rssMp and on intervals with constant sign the Lamperti-Kiu representation simplifies to the Lamperti representation.

Hence in order to describe a MAP on $\RR \times \{\pm 1\}$, we require five characteristic components which are mutually independent: two  possibly killed L\'evy processes, say  $\xi_1=(\xi_1(t), t\ge 0)$ and $\xi_{-1}=(\xi_{-1}(t), t\ge0)$, two  random variables defined on $\mathbb{R}$ , say $U_{1, -1}$ and $U_{-1, 1}$ and a $2\times 2$ intensity matrix $Q = (q_{ij})_{i,j \in \{\pm 1\}}$, which is the  transition rate matrix of the chain $J$. 

Before we  introduce the matrix exponent of a MAP, we establish the convention that all matrices appearing in this work are written in the following form
   \[
   A = 
   \begin{pmatrix}
   a_{1 1} & a_{1 -1} \\
   a_{-1 1} & a_{-1 -1}
   \end{pmatrix}.
   \]  We also denote by $A^T$ for its transpose.
Let $\psi_{-1}$ and $\psi_{1}$ be the Laplace exponent of $\xi_{-1}$ and $\xi_{1}$, respectively (when they exist). For $z\in \mathbb{C}$,  let $G(z)$ denote the matrix whose entries are given  by $G_{ij}(z) = \MAPE[e^{z U_{ij}}]$ (when they exist) for $i\ne j$, and for $i=j$, $G_{ij}(z)=1$. For $z\in \mathbb{C}$, when it exist, we define 
 \begin{equation}\label{e:MAP F}
 F(z) = \diag(  \psi_{1}(z), \psi_{-1} (z))
  + Q \Had G(z),
\end{equation}
where $\Had$ indicates element-wise multiplication  also known as Hadamard multiplication. A straightforward computation yields for $t\ge 0$,   
\[ \MAPE_{0,i}[ e^{z \xi(t)} ; J(t)=j] = \bigl(e^{F(z) t}\bigr)_{ij} , \for i,\,j \in \{\pm 1\}, \] see Proposition 2.1, section XI.2 in \cite{Asmussen2003}.
For this reason, $F$ is called the \define{matrix exponent} of the MAP  $(\xi, J)$.

We will also be interested on the dual process of $(\xi, J)$, here denoted by  $((\xi, J), \MAPPh)$. Whilst the dual of a L\'evy process is equal in law to  its negative, the situation for MAPs is a little more involved.  The dual process is the MAP with probabilities $\widehat{\mathbb{P}}_{x, i}$, for $(x, i)\in \mathbb{R}\times \{\pm 1\}$, and whose Matrix exponent, whenever it is well-defined, is given by 
\[
\widehat{\mathbb{E}}_{0, i}\Big[e^{z\xi(t)}; J(t)=j \Big]=\left(e^{\widehat{F}(z)t}\right)_{ij}, \for i,\,j \in \{\pm 1\}, 
\]
where
\[
\widehat{F}(z)=\diag(  \psi_{1}(-z), \psi_{-1} (-z))
  + \widehat{Q} \Had G(-z)^T,
\]
and $\widehat{Q}$ is the intensity matrix of the modulating Markov chain on $\{\pm 1\}$ with entries given by 
\[
\widehat{q}_{ij }
=\frac{\pi_j}{\pi_i}q_{ji},\qquad  i,j \in \{\pm 1\},
\]
where $\pi=(\pi_1, \pi_{-1})$ is the invariant distribution associated to $J$. Note that the latter can also be written $\widehat{Q}=\Delta_\pi^{-1} Q^T \Delta_\pi$ where $\Delta_\pi = \diag(\pi_1, \pi_{-1})$, the matrix with diagonal entries given by $\pi$ and zeros everywhere else. Hence, when it exists, 
\begin{equation}\label{expduality}
\widehat{F}(z)=\Delta_\pi^{-1} F(-z)^T \Delta_\pi.
\end{equation}
Equivalently, we have
\[
\pi_i\widehat{\mathbb{E}}_{0,i}\Big[e^{\lambda \xi(t)}; J(t)=j\Big]=\pi_j{\mathbb{E}}_{0,j}\Big[e^{-\lambda \xi(t)}; J(t)=i\Big].
\]
According to Dereich et al. \cite{DK2015}, we have the following time reversal property between  $\xi$ and its dual, which will be relevant for our purposes. For any $t>0,$ fixed
\begin{equation}\label{timereversal}
\left(\{(\xi(t-s)-\xi(t), J((t-s)-)):s\le t\}, \mathbb{P}_{0,\pi}\right)\stackrel{\text{Law}}{=}\left(\{(\xi(s), J(s)):s\le t\}, \widehat{\mathbb{P}}_{0,\pi}\right).
\end{equation}

Another important property for our purposes is the construction of exponential martingales of MAP. It is known, see for instance \cite[\S XI.2c]{Asmussen2003} and \cite[Proposition 2.12]{Iva-thesis}, that the  matrix $F(z),$ for $z$ real, when it exists,  has a real simple eigenvalue $\kappa(z)$, which is smooth and convex on its domain and larger than the real part of all its other eigenvalues. Furthermore, the
corresponding right-eigenvector $v(z)$ may be chosen so that
$v_i(z) > 0$ for every $i \in E$,
and normalised such that
\begin{equation}
  \pi v(z) = 1. \label{e:h norm}
\end{equation}
The leading eigenvalue is sometimes also called the
\define{Perron-Frobenius eigenvalue} and it   identifies a martingale (also known as the Wald martingale) which allow us to define a change of measure
analogous to the  Esscher transform for L\'evy processes; cf.\ \cite[Proposition XI.2.4, Theorem XIII.8.1]{Asmussen2003}.
More precisely, if
\[ M(t,\gamma) = e^{\gamma \xi(t) - \kappa(\gamma)t}
    \frac{v_{J(t)}(\gamma)}{v_{J(0)}(\gamma)} ,
  \for t \ge 0, \]
for some $\gamma\in\RR$ such that the right-hand side is defined, then $M(\cdot,\gamma)$ is a unit-mean martingale with respect to $\GGt$
  under any initial distribution of $(\xi(0),J(0))$. Thus, we can define the change of measure
  \begin{equation}\label{expchange} \left.\frac{\dd \MAPPc{\gamma}}{\dd \MAPP}\right\rvert_{\GG_t}
    = \Mn(t,\gamma) . 
    \end{equation}
Moreover, under the probability measure  $\MAPPc{\gamma}$, the process $ (\xi, J)$ is a MAP with matrix exponent $F^{(\gamma)}$ and its leading eigenvalue is given by $\kappa^{(\gamma)}(z) = \kappa(z+\gamma)-\kappa(\gamma)$.

In the sequel, we consider the following assumption that  is also known as \define{Cram\'er's condition}.

\begin{assn}
\label{Cramer} 
 There exist a $\theta>0$ and a vector $(v_1, v_{-1})$ 
  \[
  \sum_{j\in  \{\pm 1\}} \MAPE_{0,i}\Big[e^{\theta \xi(t)}; J(t)=j\Big] v_j = v_i, \qquad i\in \{\pm 1\}.
  \]
The number $\theta$ is called the \emph{Cram\'er number}.
\end{assn}

 The latter conditon implies that $F(\theta)$ exists and $\kappa(\theta)=0$. Furthermore, applying  Jensen's inequality to $\MAPE_{0,i}[e^{z \xi(t)}; J(t)=j]$, for $z\in (0, \theta)$, allows us to deduce that $F(z)$ is well defined on $(0, \theta)$, and since $\kappa$ is a convex function on its domain, it follows $\kappa(z) \leq 0$ for $z\in (0,\theta]$. Conversely, if there exists a number $\theta>0$ such that $F(z)$ is well defined on $(0,\theta]$ with $\kappa(\theta)=0$, then Assumption \ref{Cramer} holds. In other words, another equivalent way to state Assumption \ref{Cramer} is as follows: there exists a number $\theta>0$ such that $F(z)$ is well defined on $(0, \theta]$ and $\kappa(\theta)=0$.

We set $\MAPPs:=\MAPPc{\theta}$, where $\MAPPc{\theta}$ is  defined by the exponential change of measure introduced in (\ref{expchange})  with $\theta$ satisfying  the Cram\'er condition. Recall that under $\mathbb{P}^\sharp$, the process $ (\xi,J)$ is a MAP with  $\kappa^\sharp(z) = \kappa(z+\theta)$ and denote for its dual by  $((\xi, J), \widehat{\mathbb{P}}^\sharp)$ which  is a MAP  whose leading eigenvalue  is  such that  $\widehat{\kappa}^\sharp(z) = \kappa^\sharp(-z) $. Furthermore, if $I$ denotes the exponential functional of  $\xi$, i.e., 
 $$I = \int_0^\infty \exp\{\alpha \xi(t)\} \mathrm{d} t,$$
then $\MAPEhs_{0,i}\left[ I^{\theta/\alpha - 1} \right]<\infty$, for $i\in \{\pm 1\}$, as it will be seen in Lemma~\ref{expecI}.

We now formally introduce the notion of a \define{recurrent extension}.  Let $(X, \mathbf{P})$ be a rssMp, defined as above,  and $T_0$ its first hitting time to $0$, we will refer to $(X, T_0)$ as the minimal process under $\mathbf{P}_x$. We say that a real valued  Markov process $(\overline{X}, \overline{\mathbf{P}})$ satisfying the scaling  property is a recurrent extension of $(X, \mathbf{P})$ provided that it behaves like the minimal process up to its first hitting time to 0 and  for which the state 0 is a regular and recurrent state.

We say that a $\sigma$-finite measure $\mathbf{n}$ on $(\mathbb{D}, \mathcal{F}_\infty)$ having infinite mass is an \define{excursion measure} compatible with $(X,\stP)$ if the following are satisfied:
\begin{enumerate}
\item $\mathbf{n}$ is carried by
  \[
  \{\omega \in \mathbb{D} : T_0(\omega) > 0, X_{t}(\omega)=0, \forall\, t\geq T_0\};
  \]
\item for every bounded $\mathcal{F}_\infty$-measurable $H$ and each $t>0$, $\Lambda\in \mathcal{F}_t$
  \[
  \mathbf{n}(H\circ \theta_t, \Lambda \cap \{ t< T_0 \}) = \mathbf{n}(\mathbf{E}_{X_t}(H), \Lambda \cap \{ t < T_0\} ),
  \]
where $\theta_t$ denotes the shift operator;
\item $\mathbf{n}(1-e^{-T_0}) < \infty$.
\end{enumerate}
 In the case that  the measure $\mathbf{n}$ only satisfies  properties 1 and 2, then it is called a \define{pseudo-excursion measure}. If the excursion measure $\mathbf{n}$ is such that $\mathbf{n}(1-e^{-T_0}) = 1$, then it is known as \define{normalized excursion measure}. Moreover, we say that $\mathbf{n}$ is \define{self-similar} if it has the following scaling property: there exists  $\gamma\in(0,1)$ such that for all $a>0$, it holds
  \[
  H_a\mathbf{n} = a^{\gamma \alpha} \mathbf{n},
  \]
where the measure $H_a\mathbf{n}$ is the image of $\mathbf{n}$ under the mapping $H_a:\mathbb{D} \to \mathbb{D}$, defined by $H_a(\omega)(t) = a\omega (a^{-\alpha} t)$, $t\geq 0$. The parameter $\gamma$ is called the \define{index of self-similarity} of $\mathbf{n}$.

We say that  the recurrent extension $(\overline{X}, \overline{\mathbf{P}})$ for which $0$ is a regular and recurrent state \define{leaves continuously} (resp.,  \define{by a jump}) the state $0$ whenever its excursion measure $\mathbf{n}$ is carried by the paths that leave $0$ continuously (resp., that leave 0 by a jump), i.e.
\[
\mathbf{n} (X_{0+}>0)=0 \qquad (\text{resp., }\mathbf{n} (X_{0+}=0)=0 ).
\]
 
We now state our main results.  Our first main results claims that the Cram\'er condition is necessary and sufficient  for the existence of a recurrent extension that leaves 0 continuously. 

\begin{thm} \label{thm: ext}
Let $(X, \mathbf{P})$ be a  rssMp with index $\alpha>0$. Suppose that $(X,\mathbf{P})$ hits its cemetery point 0 in a finite time $\mathbf{P}$-$\as$,  and let $((\xi, J), \MAPP)$ be the MAP  associated to $(X, \mathbf{P})$ via the Lamperti-Kiu representation. Then the following are equivalent:
\begin{enumerate}
\item[(i)] there exist a Cram\'er number $\theta\in (0,\alpha)$;

\item[(ii)] there exist a recurrent extension of $(X,\mathbf{P})$ that leaves 0 continuously and such that its associated excursion measure away from 0, say $\mathbf{n}$, is such that
  $$\mathbf{n}(1-e^{-T_0}) = 1.$$
\end{enumerate}
In this case, the recurrent extension in (ii) is unique, up to normalisation of the local time, and the entrance law associated with the excursion measure $\mathbf{n}$ satisfies, for any $f$ bounded and  measurable,
\begin{equation}
\label{entlaw}
\begin{split}
\mathbf{n}\left( f(X_t), t<T_0 \right) &= \frac{1}{C_{\alpha, \theta} t^{\theta/\alpha}} \left( v_1 \pi_1 \MAPEhs_{0,1}\left[ f\left( \frac{t^{1/\alpha}}{I^{1/\alpha}}\right) I^{\theta/\alpha - 1}  \right] \right. \\
&\hspace{4cm}\left. + v_{-1}\pi_{-1} \MAPEhs_{0,-1}\left[ f\left( - \frac{t^{1/\alpha}}{I^{1/\alpha}}\right) I^{\theta/\alpha - 1}  \right]\right), 
\end{split}
\end{equation}
where $\theta$ is the Cram\'er number  and
  \[
  C_{\alpha, \theta} = \Gamma(1-\theta/\alpha) \left( v_1 \pi_1 \MAPEhs_{0,1}\left[ I^{\theta/\alpha - 1} \right] + v_{-1}\pi_{-1} \MAPEhs_{0,-1}\left[ I^{\theta/\alpha - 1} \right] \right).  
  \]
\end{thm}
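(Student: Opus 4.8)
The plan is to translate the statement into the language of Itô's excursion theory and to reduce the equivalence to the existence and characterisation of a self-similar excursion measure $\mathbf{n}$ that leaves $0$ continuously. By the synthesis theorem of Itô, in the self-similar form used by Fitzsimmons \cite{Fitzsimmons2006} and Rivero \cite{Rivero2007}, a recurrent extension of $(X,\mathbf{P})$ for which $0$ is regular and recurrent and which leaves $0$ continuously exists precisely when there is a $\sigma$-finite excursion measure $\mathbf{n}$ compatible with $(X,\mathbf{P})$, carried by paths with $X_{0+}=0$ and satisfying $\mathbf{n}(1-e^{-T_0})<\infty$, the extension being unique up to the normalisation of local time. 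The scaling property of the extension forces $\mathbf{n}$ to be self-similar of some index $\gamma\in(0,1)$, and the requirement $\mathbf{n}(1-e^{-T_0})=1$ pins down the local-time scale. Hence it is enough to show that such a self-similar excursion measure exists if and only if the Cramér condition holds with $\theta=\gamma\alpha\in(0,\alpha)$, and that it is then unique with entrance law \eqref{entlaw}.

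For the implication (i)$\Rightarrow$(ii) I would build $\mathbf{n}$ from its entrance law. Take $(\eta_t)_{t>0}$ to be the right-hand side of \eqref{entlaw}; the weights $v_i\pi_i$ and the constant $C_{\alpha,\theta}$ are dictated by the normalisation $\pi v(\theta)=1$ and by $\mathbf{n}(1-e^{-T_0})=1$. Each $\eta_t$ is finite by Lemma~\ref{expecI}, since $\theta\in(0,\alpha)$ makes $\theta/\alpha-1\in(-1,0)$ a negative moment of $I$ that is finite under $\MAPPhs_{0,i}$. The crucial point is to check that $(\eta_t)$ is a genuine entrance law for the minimal semigroup, $\eta_sP_t=\eta_{s+t}$. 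I would pass through the Lamperti--Kiu representation to write $\eta_t(f)$ as a MAP expectation, apply the Esscher change of measure \eqref{expchange} at the Cramér number (so that $\kappa(\theta)=0$ makes $\MAPPs$ conservative), and then use the path time-reversal \eqref{timereversal} together with the duality \eqref{expduality} to transfer the computation to the dual $\MAPPhs$, where the exponential functional $I$ generates the factor $t^{1/\alpha}I^{-1/\alpha}$ and the splitting into the $+$ and $-$ parts of \eqref{entlaw}. Self-similarity $H_a\mathbf{n}=a^{\theta}\mathbf{n}$ then reads off from the explicit $t^{-\theta/\alpha}$ scaling; continuity at $0$ follows because $X_t\sim\pm t^{1/\alpha}I^{-1/\alpha}\to0$ as $t\downarrow0$; and property~3 with $\mathbf{n}(1-e^{-T_0})=1$ is a Gamma-integral computation producing $\Gamma(1-\theta/\alpha)$ in $C_{\alpha,\theta}$. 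Assembling $\mathbf{n}$ as the Markovian measure with entrance law $(\eta_t)$ and invoking Itô's synthesis gives the extension.

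For the converse (ii)$\Rightarrow$(i) I would start from a self-similar excursion measure $\mathbf{n}$ of index $\gamma$ leaving $0$ continuously and recover the Cramér condition at $\theta=\gamma\alpha$. Compatibility and finiteness of $\mathbf{n}$ force the matrix exponent $F(\theta)$ to be finite and the space-time harmonic function $x\mapsto|x|^{\theta}v_{\sgn(x)}(\theta)$ of the minimal process to exist; via the Lamperti--Kiu transform this harmonic function is exactly the Wald martingale of \eqref{expchange}, so its harmonicity is equivalent to $\kappa(\theta)=0$, i.e. to the Cramér condition, with $\theta<\alpha$ because $\gamma<1$. Convexity of $\kappa$ guarantees at most one strictly positive root, so the Cramér number, and hence the index $\gamma=\theta/\alpha$, is unique; the entrance law is then determined up to a multiplicative constant, which is fixed by $\mathbf{n}(1-e^{-T_0})=1$, yielding uniqueness up to local-time normalisation.

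The step I expect to be the main obstacle is the entrance-law identity $\eta_sP_t=\eta_{s+t}$ in (i)$\Rightarrow$(ii): it is the Markov-additive analogue of the Bertoin--Yor identity and must handle the two sign-states, the modulating chain $J$, and the transitional jumps $U_{ij}$ at once, weaving together the Cramér tilt \eqref{expchange}, the duality \eqref{expduality}, and the path time-reversal \eqref{timereversal}. Reproducing precisely the $+$ and $-$ decomposition of \eqref{entlaw}, with the correct $v_i\pi_i$ weights coming from the eigenvector and the stationary law, is where the delicate bookkeeping lies, and the accompanying control of the negative moments of $I$ under $\MAPPhs_{0,i}$ provided by Lemma~\ref{expecI} is what keeps the whole construction finite.
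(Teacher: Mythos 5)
Your converse implication (ii)$\Rightarrow$(i) has a genuine gap. You assert that compatibility and finiteness of $\mathbf{n}$ ``force'' the existence of the space--time harmonic function $x\mapsto|x|^{\theta}v_{[x]}(\theta)$, whose harmonicity via Lamperti--Kiu would then be equivalent to $\kappa(\theta)=0$. But nothing in your argument manufactures a \emph{harmonic} function out of the excursion measure. What an excursion measure actually yields, through Lemma \ref{lem: excprop2}, is an invariant (hence excessive) \emph{measure} for the minimal process, namely the sojourn measure $C^{[y]}_{\alpha,\gamma}\abs{y}^{\alpha-1-\gamma\alpha}\,\dd y$; and excessiveness of that measure translates, via the Lamperti--Kiu transform, the Revuz measure of $B_t=\int_0^t\abs{X_s}^{-\alpha}\dd s$ and the linear-algebra Lemma \ref{lem: eigenposmat}, only into the inequality $\kappa(\gamma\alpha)\le 0$. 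The equality is precisely the delicate point: by Lemma \ref{lem: funcexpmom} and Theorem \ref{thm: jumpext}, the case $\kappa(\gamma\alpha)<0$ is exactly the regime in which the minimal process admits a recurrent extension leaving $0$ \emph{by a jump} with the same self-similarity index, so strict inequality must be ruled out by a separate argument. The paper does this by contradiction: if $\kappa(\gamma\alpha)<0$, mixing $\mathbf{n}$ with the jump excursion measure $\mathbf{P}_{\eta_\theta}$ gives a self-similar excursion measure charging both paths that leave $0$ continuously and paths that leave by a jump, contradicting the dichotomy of Lemma \ref{lem: excprop3}. Without this step your converse establishes only $\kappa(\gamma\alpha)\le 0$, which is not the Cram\'er condition.

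Your direct implication also stops short of a proof, though the strategy is a legitimate alternative to the paper's. You build $\mathbf{n}$ from the candidate entrance law \eqref{entlaw} and reduce everything to the identity $\eta_sP_t=\eta_{s+t}$, to be checked by combining \eqref{expchange}, \eqref{expduality} and \eqref{timereversal} --- but you explicitly defer that verification, and it is the entire content of this direction. The paper avoids this computation altogether: it verifies the hypotheses of Theorem \ref{thm: exex}, namely weak duality of the resolvents of $(X,\mathbf{P})$ and $(X,\stPhs)$ with respect to $\nu(\dd x)=\alpha\abs{x}^{\alpha-1-\theta}v_{[x]}\pi_{[x]}\dd x$ (Lemma \ref{prop: wd} combined with the Doob $h$-transform relating $\mathbf{P}$ and $\mathbf{P}^\sharp$), together with the integral condition \textbf{H.3}, which is exactly the finiteness of $\MAPE_{0,i}[I^{\theta/\alpha-1}]$ and $\MAPEhs_{0,i}[I^{\theta/\alpha-1}]$ supplied by Lemma \ref{expecI}; Theorem \ref{thm: exex} then delivers existence, uniqueness and the Laplace transform of the entrance law in one stroke, and \eqref{entlaw} follows by Fubini and a change of variables. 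If you want to keep your route, you must actually prove the entrance-law identity (the Markov-additive analogue of the Bertoin--Yor identity); as submitted, both halves of the equivalence are incomplete, and the converse is incomplete in a way that omits the key idea rather than merely its execution.
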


Our second main result provides   necessary and sufficient conditions  on the underlying MAP for the existence of recurrent extensions of rssMp that leave 0 by a jump.

\begin{thm}
\label{thm: jumpext}
For $\beta \in (0,\alpha)$, the following are equivalent:
\begin{enumerate}
\item[(i)]$\kappa$ is well defined in $\beta$ and $\kappa(\beta)<0$.

\item[(ii)] $\MAPE_{0,i}[I^{\beta/\alpha}] < \infty$, for $i\in \{\pm 1\}$.

\item[(iii)] The pseudo-excursion measure $\mathbf{n}^j = \mathbf{P}_\eta$, based on the jumping-in measure $$\eta(dx) = \abs x^{-(\beta+1)}\dd x, \qquad \textrm{for }x\neq 0,$$ is an excursion measure.

\item[(iv)] The minimal process $(X,T_0)$ admits a recurrent extension  that leaves 0 by  a jump and whose associated excursion measure $\mathbf{n}^\beta$ satisfies  $$\mathbf{n}^{\beta}\left(X_{0+}\in \dd x\right) = b_{\alpha, \beta}^{[x]} \abs x^{-(\beta+1)}\dd x,$$ where $[x]=\mathrm{sign}(x)$ and  $b_{\alpha, \beta}^1, b_{\alpha, \beta}^{-1}$ are such that 
  \[
  b_{\alpha, \beta}^1 \MAPE_{0,1}[I^{\beta/\alpha}] + b_{\alpha, \beta}^{-1} \MAPE_{0,-1}[I^{\beta/\alpha}] = \frac{\beta}{\Gamma(1-\beta/\alpha)}.
  \] 
\end{enumerate}
\end{thm}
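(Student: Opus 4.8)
The plan is to establish the three linked equivalences (i)$\Leftrightarrow$(ii), (ii)$\Leftrightarrow$(iii) and (iii)$\Leftrightarrow$(iv) separately: the first rests on the moment theory of the exponential functional, the second on a direct computation, and the third on It\^o's excursion theory.

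For (ii)$\Leftrightarrow$(iii) I would first observe that the pseudo-excursion measure $\mathbf{n}^j=\mathbf{P}_\eta=\int_{\RR^*}\mathbf{P}_x\,\eta(\dd x)$ automatically fulfils conditions 1 and 2 in the definition of an excursion measure: each $\mathbf{P}_x$, $x\neq0$, is carried by paths with $T_0>0$ that are absorbed at the cemetery, and condition 2 is the Markov property of $(X,\mathbf{P}_x)$ integrated against $\eta$. Hence $\mathbf{n}^j$ is a genuine excursion measure exactly when condition 3 holds. By the Lamperti--Kiu representation, under $\mathbf{P}_x$ the absorption time is $T_0=\abs x^\alpha I$ with the modulating chain started from $[x]=\mathrm{sign}(x)$, so that
\[
\mathbf{P}_\eta\bigl(1-e^{-T_0}\bigr)=\sum_{i\in\{\pm1\}}\int_0^\infty \MAPE_{0,i}\bigl[1-e^{-r^\alpha I}\bigr]\,r^{-(\beta+1)}\,\dd r .
\]
Using Tonelli together with the elementary identity $\int_0^\infty(1-e^{-c r^\alpha})r^{-(\beta+1)}\,\dd r=\frac{\Gamma(1-\beta/\alpha)}{\beta}\,c^{\beta/\alpha}$, valid precisely because $\beta\in(0,\alpha)$, this equals $\frac{\Gamma(1-\beta/\alpha)}{\beta}\bigl(\MAPE_{0,1}[I^{\beta/\alpha}]+\MAPE_{0,-1}[I^{\beta/\alpha}]\bigr)$, which is finite if and only if (ii) holds. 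Running the same computation with the weighted jumping-in measure $b^{[x]}_{\alpha,\beta}\abs x^{-(\beta+1)}\,\dd x$ and imposing total mass $1$ produces exactly the normalisation appearing in (iv).

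Next, (i)$\Leftrightarrow$(ii) is the analytic core and I expect it to be the main obstacle. The idea is to exploit the moment theory of the exponential functional $I$: the vector $s\mapsto(\MAPE_{0,i}[I^{s}])_{i\in\{\pm1\}}$ obeys a functional equation governed by $F(\alpha s)$ --- the first moment already reads $(\MAPE_{0,i}[I])_i=-F(\alpha)^{-1}\mathbf{1}$, which is finite exactly when $\kappa(\alpha)<0$ --- and iterating and analytically continuing this relation shows the Mellin transform is finite precisely on the range of $s$ for which $F(\alpha s)$ is well defined and $\kappa(\alpha s)<0$. Since $\kappa$ is smooth and convex with $\kappa(0)\le0$, this pins down finiteness of $\MAPE_{0,i}[I^{\beta/\alpha}]$ to the requirement that $\kappa$ be well defined at $\beta$ and $\kappa(\beta)<0$, which is (i). The delicate points are verifying that the moment-generating ingredients $\psi_i$ and $G_{ij}$, hence $F(\beta)$, genuinely exist, and transferring integrability across the tilt; the Esscher change of measure \eqref{expchange} at level $\beta$, under which the leading eigenvalue becomes $\kappa(\,\cdot+\beta)-\kappa(\beta)$, is the natural device for both directions.

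Finally, for (iii)$\Leftrightarrow$(iv) I would use the correspondence between self-similar excursion measures and recurrent extensions. Assuming (iii), the measure $\mathbf{n}^j$ is self-similar: since the scaling property \eqref{eq: self-similarity} says $H_a$ sends $\mathbf{P}_x$ to $\mathbf{P}_{ax}$, the change of variables $y=ax$ gives $H_a\mathbf{P}_\eta=a^{\beta}\mathbf{P}_\eta$, i.e. self-similarity of index $\gamma=\beta/\alpha\in(0,1)$. Normalising so that $\mathbf{n}^\beta(1-e^{-T_0})=1$ --- which forces the stated constraint on $b^1_{\alpha,\beta},b^{-1}_{\alpha,\beta}$ --- It\^o's construction glues excursions along a Poisson point process of intensity $\mathbf{n}^\beta$ into a recurrent self-similar extension; because $\mathbf{n}^\beta$ charges only paths with $X_{0+}\neq0$ and $\mathbf{n}^\beta(X_{0+}\in\dd x)=b^{[x]}_{\alpha,\beta}\abs x^{-(\beta+1)}\dd x$, this extension leaves $0$ by a jump with the claimed jumping-in law. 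Conversely, for any recurrent extension leaving $0$ by a jump, condition 2 in the limit $t\downarrow0$ forces its excursion measure to be the mixture $\mathbf{P}_\nu$ over its jumping-in law $\nu=\mathbf{n}^\beta(X_{0+}\in\cdot)$, while self-similarity of the extension forces $\nu(\dd x)\propto\abs x^{-(\beta+1)}\dd x$ on each half-line; as this excursion measure satisfies condition 3, the finiteness of its mass returns (ii), and hence (iii).
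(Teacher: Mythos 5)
Your (ii)$\Leftrightarrow$(iii) argument is correct and is exactly the paper's computation (via $T_0 \overset{d}{=} |x|^{\alpha} I$ under $\MAPP_{0,[x]}$ and the Gamma integral), and your (iii)$\Leftrightarrow$(iv) argument --- It\^o synthesis in one direction and, in the other, identification of the excursion measure of a jump-type extension as $\stP_\nu$ with $\nu$ forced by self-similarity to be proportional to $|x|^{-(\beta+1)}\dd x$ on each half-line --- matches the paper's route (which delegates the form of $\nu$ to Lemma \ref{lem: excprop4} and the synthesis to Blumenthal). The genuine gap is in (i)$\Leftrightarrow$(ii), which you yourself call ``the main obstacle'' and then only sketch. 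Two concrete problems. First, the moment functional equation you invoke steps the exponent by integer amounts (it relates $\MAPE_{0,i}[I^{s}]$ to negative or lower-order moments $\MAPE_{0,j}[I^{s-1}]$), so no iteration starting from $(\MAPE_{0,i}[I])_i=-F(\alpha)^{-1}\mathbf{1}$ can reach the fractional exponent $s=\beta/\alpha\in(0,1)$ needed here; and the ``analytic continuation'' presupposes a strip of finiteness of the Mellin transform, which is precisely what is to be proved. Second, and more seriously, the direction (ii)$\Rightarrow$(i) requires the \emph{strict} inequality $\kappa(\beta)<0$. A soft splitting argument ($I\geq e^{\alpha\xi(1)}\,I\circ\theta_1$ plus the Markov property) only yields the componentwise bounds $\MAPE_{0,i}[I^{s}]\geq \sum_j \bigl(e^{F(\alpha s)}\bigr)_{ij}\MAPE_{0,j}[I^{s}]$ and hence spectral radius at most $1$, i.e.\ $\kappa(\beta)\leq 0$. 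Nothing in your sketch excludes $\kappa(\beta)=0$, and the distinction is not cosmetic: $\kappa(\beta)=0$ is exactly the Cram\'er case of Theorem \ref{thm: ext}, in which (by the very equivalence being proved) the moments $\MAPE_{0,i}[I^{\beta/\alpha}]$ cannot both be finite.

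The paper closes this gap with Lemma \ref{lem: funcexpmom}. Splitting $I$ at time $1$ gives \emph{strict} inequalities $\MAPE_{0,i}[I^{s}] > \sum_j \bigl(e^{F(\alpha s)}\bigr)_{ij}\MAPE_{0,j}[I^{s}]$ (strict because $\int_0^1 e^{\alpha\xi(u)}\dd u>0$ a.s.), which in particular force finiteness of the entries $\bigl(e^{F(\alpha s)}\bigr)_{ij}=\MAPE_{0,i}\bigl[e^{\alpha s\xi(1)};J(1)=j\bigr]$ --- so the well-definedness of $F(\beta)$, which you list among the ``delicate points'' but do not resolve, comes out of the argument rather than having to be assumed. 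These strict inequalities give $\mathrm{tr}\bigl(e^{F(\alpha s)}\bigr)<2$ and $\det\bigl(I-e^{F(\alpha s)}\bigr)>0$, and the elementary $2\times 2$ linear-algebra Lemma \ref{lem: eigenposmat} then forces the Perron--Frobenius eigenvalue to satisfy $e^{\kappa(\alpha s)}<1$, i.e.\ $\kappa(\beta)<0$. The converse (i)$\Rightarrow$(ii) is obtained by adapting the proof of Proposition 3.6 of Kuznetsov et al.\ to the MAP $((\alpha\xi,J),\MAPP)$. To complete your proposal you would need either to reproduce arguments of this type or to carry out the Mellin-transform strategy in full; as written, your treatment of (i)$\Leftrightarrow$(ii) restates the goal rather than proving it.
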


 The remainder of this paper is organized as follows. In Section 2  three examples are presented where the main results are applied. In particular, we describe the recurrent extension that leaves 0 continuously (or by a jump) for the stable process and for  spectrally negative rssMp. Section 3 is devoted to establish and prove an existence theorem for recurrent extension of Markov processes. In the same section, some properties for self-similar excursion measures are given. Finally, Section 4 is devoted to the proofs of the main results.

\section{Examples} 
In this section, we apply our main results to  three particular classes of rssMp:  the so called stable processes, the MAP-dual stable processes (that we introduce below)  and  the spectrally negative case (or with no positive jumps).

\subsection{Stable processes.}
Let $(X,\mathbb{Q})$ be a stable process with scaling index $\alpha\in(0,2]$ starting from 0, i.e.   a L\'evy process which
also satisfies the \define{scaling property}. It is known that  the case $\alpha = 2$
corresponds to Brownian motion, which is excluded since its recurrent extensions have already been characterized by Lamperti \cite{Lamperti1972}. It is also known that 
the characteristic exponent  a stable process satisfies 
\begin{equation}\label{e:stable CE}
  \Psi(\theta) :=-\log \mathbb{Q} [e^{\iu\theta X_1}]=
  c\abs{\theta}^\alpha
  (1  - \iu\beta\tan\tfrac{\pi\alpha}{2}\sgn\theta), \for  \theta\in\RR,
\end{equation}
where $\beta = (c_+- c_-)/(c_+ + c_-)$,
$c = - (c_++c_-)\Gamma(-\alpha)\cos (\pi\alpha/2)$ and  $c_+,\, c_- $ are the two positive constants that appear on its associated L\'evy density. For more details of these facts, see  for instance Kuznetsov et al. \cite{KKPW2014} and Sato \cite[\S 14]{Sato}.

For consistency with the literature that we shall appeal to in this article, we shall always parametrize our $\alpha$-stable process in such a way that 
\[ c_+ = \Gamma(\alpha+1) \frac{\sin(\pi \alpha \rho)}{\pi} \quad \text{and} \quad
  c_- = \Gamma(\alpha+1) \frac{\sin(\pi \alpha \rhoh)}{\pi },
  \]
where
$\rho := \mathbb{Q}(X_t \ge 0)$ is the positivity parameter, 
and $\rhoh = 1-\rho$.  In that case, the constant $c$ simplifies to just $c = \cos (\pi\alpha(\rho - 1/2))$. 

It is well known that for $\alpha\in(0,1]$,  stable processes do not hit points and in particular they do not hit the point $0$. On the other hand,  for $\alpha\in(1,2)$,  stable processes make infinitely many jumps across a point, say $z$, before the first hitting time of $z$. Moreover, stable processes are transient for $\alpha\in(0, 1)$ and oscillate otherwise. Since we are interested in rssMp up to its first  hitting time of 0,  we will assume that  $\alpha \in (1,2)$. Nonetheless, it is important to point out that the computations below  holds for any value of $\alpha$.

 Let $(X, \stP)$ be the stable process killed up to its first hitting time of $0$ and we denote by  $((\xi, J), \MAPP)$ its  associated MAP  via the Lamperti-Kiu representation.
According to Kuznetsov et al. \cite{KKPW2014}, one can compute explicitly the matrix exponent of $((\xi, J), \MAPP)$ which satisfies $$
   F(z) =\left(
  \begin{matrix}
  - \displaystyle{ \frac{ \Gamma(\alpha - z) \Gamma(1 + z) }{ \Gamma(\alpha \hat{\rho} - z)\Gamma(1-\alpha \hat{\rho} + z) } } & \displaystyle{ \frac{ \Gamma(\alpha  - z) \Gamma(1 +z) }{ \Gamma(\alpha \hat{\rho})\Gamma(1-\alpha \hat{\rho})} } \\
   & \\
  \displaystyle{ \frac{ \Gamma(\alpha - z) \Gamma(1 + z) }{ \Gamma(\alpha \rho)\Gamma(1-\alpha \rho)} }  & -\displaystyle{ \frac{ \Gamma(\alpha - z) \Gamma(1 + z) }{ \Gamma(\alpha \rho -  z)\Gamma(1-\alpha \rho + z) } }
  \end{matrix}
  \right),
  $$
for $\rRe(z)\in (-1, \alpha)$. Using the reflection  identity twice, 
\[
\Gamma(1-z)\Gamma(z) = \frac{\pi}{\sin(\pi z)}, \qquad \textrm{for}\quad z\notin \mathbb{Z},
\]
and Ptolemy's identity, 
\[
\sin(\delta_1+\delta_2)\sin(\delta_2+\delta_3)=\sin(\delta_1)\sin(\delta_3)+\sin(\delta_1+\delta_2+\delta_3)\sin(\delta_2),
\]
with $\delta_1=\pi\alpha\rho$, $\delta_2=-\pi z$ and $\delta_3=\pi\alpha\hat\rho$; we conclude  that
\[
\det(F(z))=-\frac{ \Gamma(\alpha-z) \Gamma(1+z)}{ \Gamma(-z)\Gamma(1-\alpha+z)}\qquad \textrm{for} \quad z\in (-1, \alpha)\setminus \mathbb{Z}.
\]
The latter identity implies that for $\alpha\in (1,2)$, the Cram\'er number of  $((\xi, J), \MAPP)$ is $\theta=\alpha-1$. Hence, applying Theorem \ref{thm: ext}, the stable process with scaling index $\alpha\in(1,2)$ has a unique recurrent extension that leaves $0$ continuously.

\subsection{The MAP-dual of  a stable process and the stable process conditioned to be continuously absorbed at the origin.}\label{dualstable}
In this example, we  consider the case when stable processes are transient and do not hit points i.e. that $\alpha\in (0,1)$. In this  case, the process $(X, \stP)$ never hits $0$ and its radial part $|X|$ drifts to $+\infty$. In other words, its associated  MAP $((\xi, J), \MAPP)$,  via the Lamperti-Kiu representation, drifts to $+\infty$.
 In this particular example, we are interested in the  dual process $((\xi, J), \MAPPh)$ which in turn drifts to $-\infty$. We introduce its associated rssMp  $(X, \widehat{\mathbf{P}})$, via the Lamperti-Kiu representation, which we refer as the MAP-dual stable process. It is important to observe that the latter process reaches the point zero at finite time.

In order to compute the matrix  exponent $\widehat{F}$, we first observe that  using some explicit computations from \cite{CPR2013} and \cite{KP2013}, we can get explicitly the stationary distribution of $J$. More precisely,  we have
 \begin{align*}
\pi_1 &= k(\alpha) \Gamma( \alpha \hat{\rho} ) \Gamma( 1- \alpha \hat{\rho} ),  \quad 
\pi_{-1} = k(\alpha)\Gamma( \alpha \rho ) \Gamma( 1- \alpha \rho ),
\end{align*}
with  $k^{-1}(\alpha)=\Gamma( \alpha \rho ) \Gamma( 1- \alpha \rho ) + \Gamma( \alpha \hat{\rho} ) \Gamma( 1- \alpha \hat{\rho}) $. Thus,  from identity (\ref{expduality}) and straightforward computations, we deduce 
\begin{eqnarray*}
  \widehat{F}(z) =
\left(
  \begin{matrix}
  - \displaystyle{ \frac{ \Gamma(\alpha + z) \Gamma(1 - z) }{ \Gamma(\alpha \hat{\rho} + z)\Gamma(1-\alpha \hat{\rho} - z) } } & \displaystyle{ \frac{ \Gamma(\alpha + z) \Gamma(1 - z) }{ \Gamma(\alpha \hat{\rho})\Gamma(1-\alpha \hat{\rho})} } \\
   & \\
  \displaystyle{ \frac{ \Gamma(\alpha + z) \Gamma(1-z) }{ \Gamma(\alpha \rho)\Gamma(1-\alpha \rho)} }  & -\displaystyle{ \frac{ \Gamma(\alpha + z) \Gamma(1 - z) }{ \Gamma(\alpha \rho +  z)\Gamma(1-\alpha \rho - z) } }
  \end{matrix}
  \right),
\end{eqnarray*}
for $\rRe(z)\in (-\alpha,1)$. Furthermore, 
\[
\det(\widehat{F}(z)) = \det(F(-z))=\frac{\Gamma(\alpha+z) \Gamma(1-z)}{ \Gamma(z)\Gamma(1-\alpha-z)}\qquad \textrm{for} \quad z\in (- \alpha,1)\setminus \mathbb{Z},
\] implying that its Cram\'er number is $\theta=1-\alpha$. Hence, from Theorem \ref{thm: ext} we observe that  the MAP-dual  stable process $(X, \widehat{\mathbf{P}})$   has a unique recurrent extension that leaves $0$ continuously whenever $\alpha\in (1/2,1)$.

It is important to point out that there is a relationship between the former process and  the stable process conditioned to be continuously absorbed at the origin, for $\alpha\in(0,1)$. More precisely, according to Kyprianou et al.
 \cite{KRS2015},   the law of the stable process conditioned to be  continuously absorbed at the origin, here denoted by $(\stPz_x, x \in\mathbb{R}^*)$,  can be defined via  the following Doob $h$-transform 
  \[
  \left. \frac{\dd\stPz_x}{\dd\stP_x} \right|_{\mathcal{F}_t} := \frac{ \sin(\pi\alpha\hat{\rho})\mathbf{1}_{(X_t>0)} + \sin(\pi\alpha\rho)\mathbf{1}_{(X_t<0)} }{ \sin(\pi\alpha\hat{\rho})\mathbf{1}_{(x>0)} + \sin(\pi\alpha\rho)\mathbf{1}_{(x<0)} } \left| \frac{X_t}{x} \right|^{ \alpha-1} \mathbf{1}_{(t<T_0)}, \quad t\geq 0,
  \] 
where $(\mathcal{F}_t)_{t\ge 0}$, denotes the natural filtration generated by the stable process $X$ satisfying the usual conditions. The MAP associated  to the process $(X, \stPz)$, via the Lamperti-Kiu representation has  matrix exponent $ F^0$ which is similar to $\widehat{F}$ but with the roles of $\rho$ and $\hat{\rho}$ interchanged (see  Theorem 3.1 in \cite{KRS2015}). In other words,  the MAP associated to stable process conditioned to be continuously absorbed at the origin is the dual of the MAP associated to $(-X, \mathbf{P})$.  

Considering this, it can be verified that $\det(F^0(z)) = \det(\widehat{F}(z))$, for $z\in(-\alpha, 1)$, and therefore the Cram\'er number is $\theta = 1-\alpha$. Thus  the stable process conditioned to be continuously absorbed at the origin $(X, \stPz)$ has a unique recurrent extension that leaves $0$ continuously if and only if $\alpha\in (1/2,1)$.

\subsection{Spectrally negative case}

In this example, we suppose that $(X, \stP)$ is a rssMp  with no positive jumps and we will refer to this class as \define{spectrally negative rssMp}. From the Lamperti-Kiu representation, it is clear that its associated MAP $((\xi, J), \MAPP)$ also has  no positive jumps.  Therefore the rate matrix of the  Markov chain $J$ is given by
  \[
  Q = \begin{pmatrix}
  -q_{1-1} & q_{1-1} \\
  0 & 0
  \end{pmatrix}.
  \]
For simplicity, we write $q^+ = q_{1-1}$. Furthermore, since the process $X$ has no positive jumps, then $U_{-1, 1}=0$. Putting all the pieces together, we obtain that the matrix exponent of  $((\xi, J), \MAPP)$ satisfies  
  \[
  F(z) = 
  \begin{pmatrix}
 \psi^\dag(z) & q^+G_{1-1}(z) \\  
  0 &  \psi_{-1}(z) 
    \end{pmatrix},\qquad \textrm{for }\,\, z\ge0,
  \]
where $\psi^\dag(z) = \psi_1(z) - q^+,$ is the Laplace exponent of $\xi_1$ a spectrally negative L\'evy process killed at exponential time with parameter $q^+$ which is associated,  via the Lamperti representation,  to the process $X$ killed at the first time it enters $(-\infty, 0)$. Since $\xi_1$ and $\xi_{-1}$ are spectrally negative, $\psi_{i}$, $i=-1,1$ are well defined for $z\geq 0$. If $G_{1-1}(z)$ is finite for $0\leq z < z_0$, for some $z_0>0$, then $F(z)$ is well defined for $z\in [0,z_0)$. From the form of the matrix exponent $F$, it is clear that  $\det(F(z)) = 0$ if and only if $\psi^\dag_1(z)=0$ or $\psi_{-1}(z)=0$. In other words, $((\xi, J), \MAPP)$ satisfies the Cram\'er condition if and only if some of its associated L\'evy processes  satisfies the Cram\'er condition. 

On the other hand, it is well known that $\psi_1^\dag$ is a convex function with $\psi_1(0)=0$. We denote by $ \Phi^\dag_1(0) $ for its largest zero. Similarly $\Phi_{-1}(0)$ denotes the largest zero of $\psi_{-1}$.
 Hence,  $(X, \stP)$ has a recurrent extension that leaves 0 continuously whenever $\Phi_1^\dag(0)$ or $\Phi_{-1}(0)\in (0,\alpha)$. Since $ ((\xi, J), \MAPP)$ drifts to $-\infty$, then $\xi_{-1}$ drifts to $-\infty$, so $\Phi_{-1}(0)$ always exists. Therefore, by Theorem \ref{thm: ext}, $\Phi_{-1}(0)\in (0,\alpha)$ if and only if $(X, \stP)$ has a recurrent extension that leaves 0 continuously.

To illustrate this, we consider the spectrally negative  stable process, with $\alpha\in (1,2)$. In this case $\rho=1/\alpha$. Recall that the L\'evy process $\xi_1$ is associated to the stable process killed at the first time it  enters $(-\infty, 0)$, via the Lamperti representation. The process $\xi_1$ is the a Lamperti stable process that appears in \citep{KP2013} (and is denoted as $\xi^*$) and its Laplace exponent satisfies 
  \[
  \psi_1^\dag(z) = \frac{1}{\pi} \Gamma(\alpha-z) \Gamma(1+z) \sin( \pi(z-\alpha+1)).
  \]
We also observe that the  L\'evy process $\xi_{-1}$ is associated to the negative of a spectrally positive stable process  killed at the first time it hits 0, which is also the Lamperti stable process that appears in \citep{KP2013} (but with $\rho=1-1/\alpha$) and its Laplace exponent satisfies 
  \[
  \psi_{-1}(z) = \frac{1}{\pi} \Gamma(\alpha-z) \Gamma(1+z) \sin( \pi(z-\alpha)).
  \]
From the latter two expression, we have that $\theta=\alpha-1$ since  $\psi_1^\dag(\theta) = \psi_{-1}(\theta)=0$. Moreover since $\theta\in(0,\alpha)$ we deduce that  $(X,\stP)$ has a recurrent extension that leaves 0 continuously as expected.

\section{Some properties of excursion measures for rssMp}

In this section we derive the  existence  of a recurrent extension for real-valued Markov processes  and some properties of their excursion measure that are needed for the sequel. The result established in the first part of this section  is an extension to the real-valued case of a result that appears in Rivero  \cite{Rivero2005}. For simplicity, we use the same notation as in \cite{Rivero2005}.

In what follows, we set $\RRa:=\mathbb{R}\setminus\{0\}$. Let $(Y_t, t\geq 0)$ and ($\widehat{Y}_t, t\geq 0$) be two real valued Markov processes having 0 as a cemetery point. We denote by $\mathbf{Q}$ and  $\mathbf{E}_\mathbf{Q}$ (resp. $\widehat{\mathbf{Q}}$ and  $\widehat{\mathbf{E}}_\mathbf{Q}$)  for the probability and expectation associated to $Y$  (resp. for $\widehat{Y}$). Similarly, we introduce  $T_0$ (resp. $\widehat{T}_0$) for  the first hitting time of 0 for $Y$  (resp. $\widehat{Y}$). Assume that $\mathbf{Q}_x(T_0<\infty) = \widehat{\mathbf{Q}}_x(T_0<\infty) = 1$, for all $x\in \RRa$. Let $(Q_t, t\geq 0)$, $W_\lambda$, (resp. $(\widehat{Q}_t, t\geq 0)\  \widehat{W}_\lambda$)  denote the semigroup and $\lambda$-resolvent for $Y$  killed at $T_0$, (resp. for $\widehat{Y}$). For $\lambda>0$, define the functions $\varphi_\lambda, \widehat{\varphi}_\lambda: \RRa\rightarrow [0,1]$, by
  $$\varphi_\lambda(x) := \mathbf{E}_{\mathbf{Q}_x}\Big[e^{-\lambda T_0}\Big] \qquad \textrm{and} \qquad \widehat{\varphi}_\lambda(x) := \widehat{\mathbf{E}}_{\mathbf{Q}_x}\Big[e^{-\lambda T_0}\Big], \qquad x\in\RRa.$$

\subsection{Existence theorem}

We consider the following hypotheses:
\begin{description}
\item[H.1.] The processes $Y$ and $\widehat{Y}$ satisfy the basic hypotheses in Blumenthal \cite{Blumenthal92}.

\item[H.2.] The resolvents $W_\lambda$ and $\widehat{W}_\lambda$ are in weak duality with respect to a $\sigma$-finite measure $\vartheta$ defined on $\mathbb{R}^\ast$.

\item[H.3.] The following integral conditions are satisfied,
  $$\int_{\mathbb{R}^\ast} \varphi_\lambda(x) \vartheta(\ud x)<\infty \qquad \textrm{and}\qquad \int_{\mathbb{R}^\ast} \widehat{\varphi}_\lambda(x) \vartheta(\ud x)<\infty, \quad \text{for }\lambda>0.$$ 
\end{description}

The main theorem of this section which is established below corresponds to the real-valued version  of Theorem 3 given in \cite{Rivero2005}.

\begin{thm}
\label{thm: exex}
Under  hypotheses \textbf{H.1-H.3}, there exist excursion measures $\mathbf{m}$ and $\widehat{\mathbf{m}}$ compatible with the semigroups $(Q_t, t\geq 0)$ and $(\widehat{Q}_t, t\geq 0)$, respectively, such that the Laplace transforms of the entrance laws $(\mathbf{m}_s, s>0)$ and $(\widehat{\mathbf{m}}_s, s>0)$ associated with $\mathbf{m}$ and $\widehat{\mathbf{m}}$, respectively, are determined by
  $$\int_0^\infty e^{-\lambda s} \mathbf{m}_s f \ud s = \int_{\mathbb{R}^\ast} \widehat{\varphi}_\lambda(x)f(x) \vartheta(\ud x), \qquad \int_0^\infty e^{-\lambda s} \widehat{\mathbf{m}}_s f \ud s = \int_{\mathbb{R}^\ast} \varphi_\lambda(x)f(x) \vartheta(\ud x), $$
for any  continuous, bounded function $f$ and $\lambda >0$. Furthermore, associated with these excursion measures there exist Markov processes $Y^\ast$ and $\widehat{Y}^\ast$ which are extensions of $Y$ and $\widehat{Y}$, respectively, and which are still in weak duality with respect to measure $\vartheta$.
\end{thm}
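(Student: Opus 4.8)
The plan is to follow the strategy of Theorem 3 in \cite{Rivero2005}, adapting it from the half-line to the two-sided state space $\RRa$; the only genuinely new bookkeeping is that excursions now carry a sign, which does not interfere with any of the duality computations. The construction of $\mathbf{m}$ reduces to producing an entrance law $(\mathbf{m}_s, s>0)$ for the killed semigroup $(Q_t)$ whose Laplace transform is the prescribed functional $\nu_\lambda(f):=\int_{\RRa}\widehat{\varphi}_\lambda(x) f(x)\vartheta(\ud x)$ (this integral is finite for bounded $f$ by \textbf{H.3}); the measure $\widehat{\mathbf{m}}$ and its entrance law are then obtained symmetrically by interchanging the roles of $Y$ and $\widehat{Y}$, i.e.\ of $\varphi$ and $\widehat{\varphi}$ and of $W$ and $\widehat{W}$.

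The heart of the argument is to show that $\nu_\lambda$ is the Laplace transform of an entrance law, which by the standard bijection is equivalent to the consistency relation $(\mu-\lambda)\,\nu_\lambda(W_\mu f)=\nu_\lambda(f)-\nu_\mu(f)$ for $\lambda\neq\mu$. The only analytic input is the elementary identity $\widehat{W}_\mu\widehat{\varphi}_\lambda=(\mu-\lambda)^{-1}(\widehat{\varphi}_\lambda-\widehat{\varphi}_\mu)$, obtained by writing $\widehat{W}_\mu\widehat{\varphi}_\lambda(x)=\widehat{\mathbf{E}}_{\mathbf{Q}_x}[\int_0^{T_0}e^{-\mu t}\widehat{\varphi}_\lambda(\widehat{Y}_t)\,\ud t]$, using the Markov property of $\widehat{Y}$ to replace $\widehat{\varphi}_\lambda(\widehat{Y}_t)\mathbf{1}_{\{t<T_0\}}$ by $\widehat{\mathbf{E}}_{\mathbf{Q}_x}[e^{-\lambda(T_0-t)}\mathbf{1}_{\{t<T_0\}}\mid\mathcal{F}_t]$, and integrating out. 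Combining this with weak duality \textbf{H.2} in the form $\int(W_\mu f)\,\widehat{\varphi}_\lambda\,\ud\vartheta=\int f\,(\widehat{W}_\mu\widehat{\varphi}_\lambda)\,\ud\vartheta$ gives $\nu_\lambda(W_\mu f)=\int f\,\widehat{W}_\mu\widehat{\varphi}_\lambda\,\ud\vartheta=(\mu-\lambda)^{-1}(\nu_\lambda(f)-\nu_\mu(f))$, which is exactly the consistency relation and avoids any ill-defined pairing with $\int f\,\ud\vartheta$. Since $\widehat{\varphi}_\lambda\downarrow 0$ as $\lambda\to\infty$ with $\widehat{\varphi}_1\in L^1(\vartheta)$ by \textbf{H.3}, dominated convergence gives $\nu_\lambda(f)\to 0$, so the theory of entrance laws furnishes a unique entrance law $(\mathbf{m}_s)$ with $\int_0^\infty e^{-\lambda s}\mathbf{m}_s f\,\ud s=\nu_\lambda(f)$, which is the first displayed formula; $(\widehat{\mathbf{m}}_s)$ is produced identically.

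From the entrance law I would pass to the path-space measure and then to the process. The family $(\mathbf{m}_s)$ determines a $\sigma$-finite measure $\mathbf{m}$ on $\mathbb{D}$ under which the coordinate process is Markov with semigroup $(Q_t)$ and one-dimensional laws $\mathbf{m}_s$ (see \cite{Blumenthal92}, and \cite{Rivero2005} for this exact passage): property~2 is then the Markov property, and property~1 holds because $(Q_t)$ is the semigroup of $Y$ absorbed at $0$, so the paths are carried by $\{T_0>0,\ X_t=0\ \forall t\ge T_0\}$. Property~3 is precisely \textbf{H.3} specialised to $\lambda=1$, since $\mathbf{m}(1-e^{-T_0})=\int_0^\infty e^{-s}\mathbf{m}_s(1)\,\ud s=\nu_1(1)=\int_{\RRa}\widehat{\varphi}_1\,\ud\vartheta<\infty$. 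Hypothesis \textbf{H.1} then permits the It\^o-type synthesis of a recurrent extension $Y^\ast$ of $Y$ out of $\mathbf{m}$, exactly as in \cite{Blumenthal92, Rivero2005}, and $\widehat{Y}^\ast$ is built in the same way from $\widehat{\mathbf{m}}$.

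The main obstacle is the final assertion that $Y^\ast$ and $\widehat{Y}^\ast$ remain in weak duality with respect to $\vartheta$. I would establish this at the level of resolvents: by the first-entrance/last-exit decomposition of excursion theory, the resolvent $R_\lambda^\ast$ of the extension is the killed resolvent $W_\lambda$ corrected by a term that factors through the entrance law of $\mathbf{m}$ (hence through $\widehat{\varphi}_\lambda$), and likewise $\widehat{R}_\lambda^\ast$ is $\widehat{W}_\lambda$ plus a correction factoring through $\varphi_\lambda$. Duality of the killed pieces is \textbf{H.2}, while the two correction kernels are $\vartheta$-adjoint precisely because of the $\varphi\leftrightarrow\widehat{\varphi}$ swap built into the construction, again via \textbf{H.2}. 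The delicate points are the normalisation of the local time at the regular branch point $0$ and the verification that no atom at $0$ spoils the bilinear identity on $\RRa$; here I would reproduce Rivero's computation step by step, the two-sided state space entering only through the sign carried by excursions and therefore not affecting the duality bookkeeping.
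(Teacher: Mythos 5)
Your construction of the entrance laws and of the excursion measures is essentially the paper's own proof: you introduce the same family $M_\lambda f=\int_{\RRa}f\,\widehat{\varphi}_\lambda\,\ud\vartheta$, prove the same consistency relation $(\mu-\lambda)M_\lambda W_\mu f=M_\lambda f-M_\mu f$ from the identity $(\mu-\lambda)\widehat{W}_\mu\widehat{\varphi}_\lambda=\widehat{\varphi}_\lambda-\widehat{\varphi}_\mu$ combined with weak duality (this is Lemma \ref{lem: exex1}; your derivation of the identity via the Markov property is a harmless variant of the paper's, which uses $\widehat{W}_\lambda 1=\lambda^{-1}(1-\widehat{\varphi}_\lambda)$ and the resolvent equation), check that $M_\lambda 1\to 0$ as $\lambda\to\infty$, and then invoke the entrance-law correspondence (the paper cites Theorem 6.9 of Getoor--Sharpe) and Blumenthal's synthesis of the extension. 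Up to that point your argument is sound and coincides with the paper's.

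The gap is in your final paragraph, on the weak duality of $Y^\ast$ and $\widehat{Y}^\ast$. By \eqref{iidentity-lemma} the resolvent of the extension is
\[
W^\ast_\lambda f(x)=W_\lambda f(x)+\varphi_\lambda(x)\,W^\ast_\lambda f(0),
\qquad
W^\ast_\lambda f(0)=\frac{M_\lambda f}{\lambda M_\lambda 1},
\]
and similarly for the hatted process. Pairing the correction terms against $g$ and $f$ respectively gives
\[
\widehat{M}_\lambda g\cdot\frac{M_\lambda f}{\lambda M_\lambda 1}
\qquad\text{and}\qquad
M_\lambda f\cdot\frac{\widehat{M}_\lambda g}{\lambda \widehat{M}_\lambda 1},
\qquad\text{where }\widehat{M}_\lambda g=\int_{\RRa}g\,\varphi_\lambda\,\ud\vartheta .
\]
These coincide if and only if $M_\lambda 1=\widehat{M}_\lambda 1$, i.e.\ $\int_{\RRa}\widehat{\varphi}_\lambda\,\ud\vartheta=\int_{\RRa}\varphi_\lambda\,\ud\vartheta$. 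This is \emph{not} a formal consequence of \textbf{H.2} together with the $\varphi\leftrightarrow\widehat{\varphi}$ swap, as you assert: the obvious attempt, writing $\varphi_\lambda=1-\lambda W_\lambda 1$ and pairing with $1$, collapses because $\vartheta$ has infinite mass, so the difference cannot be split. This scalar identity is precisely the paper's Lemma \ref{lem: exex2}, and it needs its own argument: taking $f=1$ in the consistency relation gives $\lambda M_\lambda 1-\mu M_\mu 1=(\lambda-\mu)M_\lambda\varphi_\mu$; since $M_\lambda\varphi_\mu=\widehat{M}_\mu\widehat{\varphi}_\lambda$ (both equal $\int\varphi_\mu\widehat{\varphi}_\lambda\,\ud\vartheta$), the same difference is obtained for the hatted quantities, so $\lambda M_\lambda 1$ and $\lambda\widehat{M}_\lambda 1$ agree up to an additive constant; the constant is then killed by showing $\lim_{s\to\infty}\mathbf{m}_s1=\lim_{s\to\infty}\widehat{\mathbf{m}}_s1=0$, via the representation $\lambda M_\lambda 1=\mathbf{m}(1-e^{-\lambda T_0})=\lim_{s\to\infty}\mathbf{m}_s1+\int_0^\infty(1-e^{-\lambda t})\,\nu(\ud t)$ and the limit $\lambda\downarrow0$. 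Note also that you misplace the difficulty: it is not the normalisation of the local time (the ratio $M_\lambda f/(\lambda M_\lambda 1)$ is invariant under rescaling of $\mathbf{m}$), but this symmetry of total masses between the process and its dual. Your stated intention to reproduce Rivero's computation would indeed supply the lemma, but as written the justification of the decisive step is incorrect, and without $M_\lambda 1=\widehat{M}_\lambda 1$ the duality of the extensions does not follow.
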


The proof of the previous theorem follows similar arguments as those given in \cite{Rivero2005}, and in particular, it is based on the following three lemmas. For these reasons we will just provide the main ideas of their proofs.

\begin{lem}
\label{lem: exex1}
The family of measures $(M_\lambda , \lambda> 0)$, defined by 
\[
M_\lambda f := \int_{\RRa} f(x) \hat{\varphi}_\lambda(x) \vartheta(\ud  x) , 
\]
is such that the following hold:
 \begin{enumerate}
 \item[(i)] $\lim_{\lambda \to \infty }M_\lambda 1 = 0$;
 
 \item[(ii)] for $\mu, \lambda>0$ such that  $\mu\neq \lambda$ and  $f$ continuous and bounded, we have
   \[
   (\mu-\lambda)M_\lambda W_\mu f = M_\lambda f - M_\mu f.
   \]
 \end{enumerate}
\end{lem}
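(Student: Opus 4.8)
The two assertions are about the family $M_\lambda f = \int_{\RRa} f(x)\,\widehat{\varphi}_\lambda(x)\,\vartheta(\dd x)$, where $\widehat{\varphi}_\lambda(x) = \widehat{\mathbf{E}}_{\mathbf{Q}_x}[e^{-\lambda T_0}]$. The guiding principle throughout is to rewrite $\widehat{\varphi}_\lambda$ as a resolvent object so that the hypotheses \textbf{H.1--H.3} (finiteness of the integral, and weak duality of $W_\lambda$ and $\widehat W_\lambda$ with respect to $\vartheta$) can be brought to bear. First I would record the elementary probabilistic identity
\[
\widehat{\varphi}_\lambda(x) = \widehat{\mathbf{E}}_{\mathbf{Q}_x}\bigl[e^{-\lambda T_0}\bigr] = 1 - \lambda\,\widehat W_\lambda 1(x),
\]
valid because $\widehat T_0 < \infty$ a.s.\ and $\int_0^{\widehat T_0} e^{-\lambda s}\,\dd s = (1-e^{-\lambda \widehat T_0})/\lambda$; this is the translation device that turns statements about $\widehat{\varphi}_\lambda$ into statements about the killed resolvent $\widehat W_\lambda$.

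For part (i), I would argue that $\widehat{\varphi}_\lambda(x) \downarrow 0$ pointwise as $\lambda \to \infty$ for each fixed $x \in \RRa$, since $T_0 > 0$ and $e^{-\lambda T_0} \to 0$ a.s.\ by dominated convergence under $\widehat{\mathbf{Q}}_x$. The family $\lambda \mapsto \widehat{\varphi}_\lambda(x)$ is monotone decreasing in $\lambda$, so for any fixed $\lambda_0 > 0$ the integrand is dominated by $\widehat{\varphi}_{\lambda_0}$, which is $\vartheta$-integrable by \textbf{H.3}. The dominated convergence theorem then gives $\lim_{\lambda\to\infty} M_\lambda 1 = \int_{\RRa} \lim_{\lambda\to\infty}\widehat{\varphi}_\lambda(x)\,\vartheta(\dd x) = 0$.

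For part (ii), the resolvent identity $\widehat W_\lambda - \widehat W_\mu = (\mu-\lambda)\widehat W_\lambda \widehat W_\mu$ combined with the displayed formula for $\widehat{\varphi}_\lambda$ yields, after a short manipulation,
\[
\widehat{\varphi}_\lambda - \widehat{\varphi}_\mu = (\mu - \lambda)\,\widehat W_\mu \widehat{\varphi}_\lambda,
\]
i.e.\ $\widehat{\varphi}_\lambda$ solves the resolvent equation. I would then integrate the combination $M_\lambda f - M_\mu f = \int_{\RRa}\bigl(\widehat{\varphi}_\lambda - \widehat{\varphi}_\mu\bigr) f\,\dd\vartheta$ and invoke the weak duality of \textbf{H.2}, which allows the operator $\widehat W_\mu$ to be transferred from $\widehat{\varphi}_\lambda$ onto $f$, replacing $\widehat W_\mu$ by $W_\mu$ under the integral against $\vartheta$. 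This converts $\int (\mu-\lambda)\widehat W_\mu \widehat{\varphi}_\lambda \cdot f\,\dd\vartheta$ into $(\mu-\lambda)\int \widehat{\varphi}_\lambda \cdot W_\mu f\,\dd\vartheta = (\mu-\lambda) M_\lambda W_\mu f$, which is exactly the claimed identity.

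\emph{Main obstacle.} The delicate point is the duality step in part (ii): weak duality is an $L^2(\vartheta)$ (or at least a bilinear-pairing) statement, so I must ensure the integrals are absolutely convergent and that the functions involved lie in the appropriate class before transposing the operator. The bound $0 \le \widehat{\varphi}_\lambda \le 1$ together with \textbf{H.3} controls the relevant integrands, but I would need to check that $\widehat W_\mu\widehat{\varphi}_\lambda$ and $W_\mu f$ pair integrably against $\vartheta$; this is where boundedness of $f$ and the integrability in \textbf{H.3} are used jointly. Since the statement only requests the main ideas (the paper itself says it will "just provide the main ideas"), I would present the identity derivation cleanly and flag the Fubini/duality justification rather than belabour the measure-theoretic bookkeeping.
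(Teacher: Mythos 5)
Your proposal is correct and follows essentially the same route as the paper: part (i) via $\vartheta$-integrability of $\widehat{\varphi}_\lambda$ (hypothesis \textbf{H.3}) and dominated convergence, and part (ii) via the identity $\widehat{W}_\lambda 1 = \lambda^{-1}(1-\widehat{\varphi}_\lambda)$, the resolvent equation yielding $(\mu-\lambda)\widehat{W}_\mu\widehat{\varphi}_\lambda = \widehat{\varphi}_\lambda - \widehat{\varphi}_\mu$, and then weak duality (\textbf{H.2}) to transpose $\widehat{W}_\mu$ onto $f$. Your additional attention to the integrability needed to justify the duality pairing is a sensible refinement of the paper's terser argument, not a departure from it.
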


\begin{proof} We first observe  that the hypothesis \textbf{H.3} implies that $M_\lambda 1$ is finite for all $\lambda>0$. Hence claim (i) follows from  the dominated convergence theorem. To prove (ii), we first observe that for $\lambda>0$, $ \widehat{W}_\lambda 1 = \lambda^{-1}( 1-\widehat{\varphi}_\lambda).$
Hence, using  the well-known identity  for resolvents $(\mu-\lambda)\widehat{W}_\mu\widehat{W}_\lambda = \widehat{W}_\lambda-\widehat{W}_\mu$, for $\mu\neq \lambda$, we deduce
  \[
  (\mu-\lambda) \widehat{W}_\mu \widehat{\varphi}_\lambda = \widehat{\varphi}_\lambda - \widehat{\varphi}_\mu, \qquad  \textrm{for}\quad \mu\neq \lambda. 
  \]
Thus, using the latter identity and the weak duality between  the resolvents $W_\lambda$ and $\widehat{W}_\lambda$ we get for $\mu\neq \lambda$,
\[
   (\mu-\lambda)M_\lambda W_\mu f = M_\lambda f - M_\mu f,
   \]
as expected. This completes the proof.
\end{proof}

Next, we observe that  Lemma \ref{lem: exex1} and Theorem 6.9 in \citep{GS1973} guarantee that there exists a unique entrance law $(\mathbf{m}_t, t>0)$ for the semigrup $(Q_t, t\geq 0)$, such that for $\lambda>0$ and $f$ measurable and bounded,
  \[
 \int_0^1\mathbf{m}_t1\dd t < \infty, \qquad \textrm{and} \qquad M_\lambda f = \int_0^\infty e^{-\lambda t} \mathbf{m}_t f \mathrm{d}t.
  \]

According with \cite{Blumenthal92}, for an entrance law $(\mathbf{m}_s, s>0)$, there exists a unique excursion measure $\mathbf{m}$ having this entrance law. The same arguments guarantee the existence of an excursion measure $\widehat{\mathbf{m}}$ and an entrance law $(\widehat{\mathbf{m}}_t, t>0)$ for the semigroup $(\widehat{Q}_t, t\geq 0)$.  

Using the results in \citep{Blumenthal92}, we obtain that associated with the excursion measure $\mathbf{m}$ there exists a unique Markov process $Y^\ast$  extending $Y$ and the $\lambda$-resolvent of $Y^\ast$ is determined by the following identities
 \begin{equation}\label{iidentity-lemma}
 W^\ast_\lambda f(0) = \frac{M_\lambda f}{ \lambda M_\lambda 1 }, \qquad \textrm{and} \qquad W^\ast_ \lambda f(x) = W_\lambda f(x) + \varphi_\lambda(x) W^\ast_\lambda f(0), \quad \textrm{for } x\in \RRa, 
 \end{equation}
for $f$ measurable and bounded. Similarly, we obtain the existence of  $\widehat{Y}^\ast,$ and its associated $\lambda$-resolvent  $\widehat{W}^\ast_\lambda$ is defined in a similar way.

Now, since $\mathbf{m}_s1$ is decreasing in $s$ and $\int_0^1 \mathbf{m}_s1\mathrm{d}s $ is finite, we deduce
  \begin{equation*}
  \mu M_\mu 1 = \lim_{s\to \infty} \mathbf{m}_s 1 + \int_0^\infty (1-e^{-\mu t}) \nu(\dd  t), 
  \end{equation*}
where $\nu(\dd t) = -\dd \mathbf{m}_t 1$. A similar identity holds for $\widehat{M}_\mu$ with $\widehat{\nu}(\ud t)= -\dd \widehat{\mathbf{m}}_t 1$. On the one hand, using Lemma \ref{lem: exex1} part (ii), we get  \[
  (\lambda - \mu)M_\lambda \varphi_\mu = \lambda M_\lambda 1 - \mu M_\mu 1 \qquad\textrm{and}\qquad
  (\lambda - \mu)\widehat{M}_\lambda \widehat{\varphi}_\mu = \lambda \widehat{M}_\lambda 1 - \mu \widehat{M}_\mu 1.
  \]
The latter  identities, together with $M_\lambda \varphi_\mu = \widehat{M}_\mu \widehat{\varphi}_\lambda $, imply
  \[
  \lambda M_\lambda 1 - \mu M_\mu 1 = \lambda \widehat{M}_\lambda 1 - \mu \widehat{M}_\mu 1.
  \]    
Therefore by letting $\mu\to 0$, it is clear
  \[
   \lambda M_\lambda 1 - \lim_{s\to \infty} \mathbf{m}_s1 = \lambda \widehat{M}_\lambda 1 - \lim_{s\to \infty} \widehat{\mathbf{m}}_s1,
  \]
implying
\begin{equation*}
  \int_0^\infty (1-e^{-\lambda s}) \nu(\dd  s) = \int_0^\infty (1-e^{-\lambda s}) \widehat{\nu}(\dd  s).
\end{equation*}
On the other hand,  since $\mathbf{m}$ is the excursion measure associated to the entrance law $(\mathbf{m}_s, s>0)$, we have
  \[
  \mathbf{m}(1-e^{-\lambda T_0}) = \lambda M_\lambda 1 = \lim_{s\to \infty} \mathbf{m}_s 1 + \int_0^\infty (1-e^{-\lambda t}) \nu(\dd  t).
  \]
Thus if we let  $\lambda\to 0$, the dominated convergence theorem implies  $\lim_{s\to \infty} \mathbf{m}_s1=0$. Similarly, one can deduce that $\lim_{s\to \infty} \widehat{\mathbf{m}}_s1=0$. Putting all the pieces together, the following result can be deduced.

\begin{lem}
\label{lem: exex2}
For every $\lambda>0$, we have $\lambda M_\lambda 1 = \lambda \widehat{M}_\lambda 1$.
\end{lem}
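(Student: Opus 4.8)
The plan is to compare $\lambda M_\lambda 1$ with $\lambda \widehat{M}_\lambda 1$ by exploiting the symmetry that the common reference measure $\vartheta$ forces on the definitions of $M_\lambda$ and $\widehat{M}_\lambda$. The key elementary observation is that, for all $\lambda,\mu>0$,
\[
M_\lambda \varphi_\mu = \int_{\RRa} \varphi_\mu(x)\,\widehat{\varphi}_\lambda(x)\,\vartheta(\dd x) = \widehat{M}_\mu \widehat{\varphi}_\lambda,
\]
since both sides are literally the same integral against $\vartheta$.

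First I would produce a resolvent-type identity for $M$. Recalling that $W_\mu 1 = \mu^{-1}(1-\varphi_\mu)$, an application of Lemma \ref{lem: exex1}(ii) with $f\equiv 1$ and a rearrangement yield
\[
(\lambda-\mu)\,M_\lambda \varphi_\mu = \lambda M_\lambda 1 - \mu M_\mu 1,
\]
together with its dual $(\lambda-\mu)\,\widehat{M}_\lambda \widehat{\varphi}_\mu = \lambda \widehat{M}_\lambda 1 - \mu \widehat{M}_\mu 1$. Interchanging $\lambda$ and $\mu$ in the dual identity and then using the symmetry relation above to equate $M_\lambda \varphi_\mu = \widehat{M}_\mu \widehat{\varphi}_\lambda$, the two left-hand sides coincide and we obtain
\[
\lambda M_\lambda 1 - \mu M_\mu 1 = \lambda \widehat{M}_\lambda 1 - \mu \widehat{M}_\mu 1.
\]

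Next I would let $\mu\to 0$. Using the representation $\mu M_\mu 1 = \lim_{s\to\infty}\mathbf{m}_s 1 + \int_0^\infty (1-e^{-\mu t})\,\nu(\dd t)$ and its hat analogue, monotone convergence kills the two integral terms and leaves
\[
\lambda M_\lambda 1 - \lim_{s\to\infty}\mathbf{m}_s 1 = \lambda \widehat{M}_\lambda 1 - \lim_{s\to\infty}\widehat{\mathbf{m}}_s 1.
\]
It then remains to verify that the two tail limits vanish. For this I would substitute the representation into the identity $\mathbf{m}(1-e^{-\lambda T_0}) = \lambda M_\lambda 1$ and send $\lambda\to 0$: evaluating the same identity at $\lambda=1$ shows $\mathbf{m}(1-e^{-T_0}) = M_1 1 < \infty$ by \textbf{H.3}, so, since $\lambda\mapsto 1-e^{-\lambda T_0}$ is nondecreasing, dominated convergence forces $\mathbf{m}(1-e^{-\lambda T_0})\to 0$ while the integral term again vanishes; hence $\lim_{s\to\infty}\mathbf{m}_s 1 = 0$, and by symmetry $\lim_{s\to\infty}\widehat{\mathbf{m}}_s 1 = 0$. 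Feeding these back into the previous display yields the claim.

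The main obstacle I anticipate is the careful justification of these vanishing tail limits: it requires interchanging the limits $\mu\to 0$ and $\lambda\to 0$ with the integrals against $\nu$ and $\widehat{\nu}$, and relies crucially on the integrability $\mathbf{m}(1-e^{-T_0})<\infty$ to dominate $1-e^{-\lambda T_0}$. By contrast, the algebraic combination of the two resolvent identities through the symmetry relation is routine once one notices that the index swap $\lambda\leftrightarrow\mu$ is needed to align the two left-hand sides.
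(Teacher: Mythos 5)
Your proposal is correct and mirrors the paper's own argument step for step: the same symmetry observation $M_\lambda \varphi_\mu = \widehat{M}_\mu \widehat{\varphi}_\lambda$, the same resolvent identities $(\lambda-\mu)M_\lambda\varphi_\mu = \lambda M_\lambda 1 - \mu M_\mu 1$ obtained from Lemma \ref{lem: exex1}(ii) with $f\equiv 1$, the same limit $\mu\to 0$ using the representation of $\mu M_\mu 1$ via $\nu$, and the same $\lambda\to 0$ argument through $\mathbf{m}(1-e^{-\lambda T_0})=\lambda M_\lambda 1$ to kill the tail limits $\lim_{s\to\infty}\mathbf{m}_s 1$ and $\lim_{s\to\infty}\widehat{\mathbf{m}}_s 1$. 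No substantive differences to report.
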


The last lemma in this section establishes weak duality between the resolvents $W^*_\lambda$ and $\widehat{W}^*_\lambda$ with respect to the $\sigma$-finite measure $\vartheta$.

\begin{lem}
For every $\lambda>0$ and every measurable functions $f,g:\RR \to \RR$, we have
  \[
  \int_{\RRa} g(y)W^*_\lambda f(y) \vartheta(\dd  y) = \int_{\RRa} f(y) \widehat{W}^*_\lambda g(y) \vartheta(\dd  y).
  \]
\end{lem}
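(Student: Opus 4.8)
The plan is to prove the identity by direct substitution of the explicit resolvent formulas from \eqref{iidentity-lemma}, reducing the claim to the weak duality of the \emph{killed} resolvents (hypothesis \textbf{H.2}) together with Lemma~\ref{lem: exex2}. It suffices to establish the identity for bounded measurable $f,g$, since then $M_\lambda 1<\infty$ and $\widehat{M}_\lambda 1<\infty$ guarantee that every integral appearing below is finite; the general case follows by a routine monotone-class/approximation argument. I would first record that, by the construction in \eqref{iidentity-lemma} and its dual analogue, the quantities $W^\ast_\lambda f(0)=M_\lambda f/(\lambda M_\lambda 1)$ and $\widehat{W}^\ast_\lambda g(0)=\widehat{M}_\lambda g/(\lambda \widehat{M}_\lambda 1)$ are constants (independent of the integration variable), and that on $\RRa$ one has $W^\ast_\lambda f(x)=W_\lambda f(x)+\varphi_\lambda(x)\,W^\ast_\lambda f(0)$ and $\widehat{W}^\ast_\lambda g(x)=\widehat{W}_\lambda g(x)+\widehat{\varphi}_\lambda(x)\,\widehat{W}^\ast_\lambda g(0)$.

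Next I would expand both sides. Inserting the formula for $W^\ast_\lambda f$ and pulling the constant $W^\ast_\lambda f(0)$ out of the integral gives
\[
\int_{\RRa} g\, W^\ast_\lambda f\, \vartheta
= \int_{\RRa} g\, W_\lambda f\, \vartheta
+ W^\ast_\lambda f(0)\int_{\RRa} g\,\varphi_\lambda\, \vartheta .
\]
By the definitions introduced in Lemma~\ref{lem: exex1} and its dual, $\int_{\RRa} g\,\varphi_\lambda\,\vartheta = \widehat{M}_\lambda g$ and, symmetrically, $\int_{\RRa} f\,\widehat{\varphi}_\lambda\,\vartheta = M_\lambda f$. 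Doing the same expansion on the right-hand side yields the two ``boundary'' terms $W^\ast_\lambda f(0)\,\widehat{M}_\lambda g$ and $\widehat{W}^\ast_\lambda g(0)\, M_\lambda f$, together with the two ``bulk'' terms $\int_{\RRa} g\,W_\lambda f\,\vartheta$ and $\int_{\RRa} f\,\widehat{W}_\lambda g\,\vartheta$.

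The bulk terms coincide by the weak duality of $W_\lambda$ and $\widehat{W}_\lambda$ with respect to $\vartheta$ (hypothesis \textbf{H.2}), hence they cancel, and the whole claim collapses to the scalar identity
\[
W^\ast_\lambda f(0)\,\widehat{M}_\lambda g \;=\; \widehat{W}^\ast_\lambda g(0)\, M_\lambda f ,
\quad\text{i.e.}\quad
\frac{M_\lambda f\,\widehat{M}_\lambda g}{\lambda M_\lambda 1}
=\frac{\widehat{M}_\lambda g\, M_\lambda f}{\lambda \widehat{M}_\lambda 1}.
\]
After cancelling the common factor $M_\lambda f\,\widehat{M}_\lambda g$, this is precisely $M_\lambda 1=\widehat{M}_\lambda 1$, which is the content of Lemma~\ref{lem: exex2}. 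I expect no genuine obstacle here: the argument is essentially bookkeeping, and the only substantive inputs are the explicit resolvent description of the extensions and the symmetry $\lambda M_\lambda 1=\lambda\widehat{M}_\lambda 1$ already proved. The one point requiring a little care is the integrability/justification for pulling the constants out and for the monotone-class extension to arbitrary measurable $f,g$, together with keeping the roles of $\varphi_\lambda$ and $\widehat{\varphi}_\lambda$ (and of $M_\lambda$ and $\widehat{M}_\lambda$) straight so that the boundary terms are matched with the correct survival functions.
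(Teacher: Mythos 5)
Your proposal is correct and follows essentially the same route as the paper's own proof: expand via the identities in \eqref{iidentity-lemma}, cancel the bulk terms by the weak duality of $W_\lambda$ and $\widehat{W}_\lambda$ (hypothesis \textbf{H.2}), and reduce the boundary terms to the scalar identity $W^*_\lambda f(0)\,\widehat{M}_\lambda g = \widehat{W}^*_\lambda g(0)\,M_\lambda f$, which is immediate from Lemma~\ref{lem: exex2}. The only difference is cosmetic (you expand both sides symmetrically and note the measurability/integrability bookkeeping explicitly, which the paper leaves implicit).
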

\begin{proof}
Using  the second identity in (\ref{iidentity-lemma}), we obtain
  \[
  \int_{\RRa} g(y)W^*_\lambda f(y) \vartheta(\dd  y) = \int_{\RRa} g(y) W_\lambda f(y) \vartheta(\dd  y) + W^*_\lambda f(0) \widehat{M}_\lambda g.
  \]
Now, the first identity in (\ref{iidentity-lemma})  and Lemma \ref{lem: exex2} imply $W^*_\lambda f(0) \widehat{M}_\lambda g = \widehat{W}_\lambda g(0) M_\lambda f$. Thus, using the weak duality between $W_\lambda$ and $\widehat{W}_\lambda$, we conclude
\begin{eqnarray*}
\int_{\RRa} g(y)W^*_\lambda f(y) \vartheta(\dd  y) &=& 
\int_{\RRa} f(y) \widehat{W}_\lambda g(y) \vartheta(\dd  y) + \widehat{W}^*_\lambda g(0) M_\lambda f \\
&=&
\int_{\RRa} f(y)\widehat{W}^*_\lambda g(y) \vartheta(\dd  y).
\end{eqnarray*}
This completes the proof.
\end{proof}

\subsection{Self-similarity property}

For the development of this section, we introduce the following transformation: for $c\in \RR$, let $H_c$ be such that $H_c f(x) = f(cx)$. Our first lemma provides equivalences of the self-similarity property of the excursion measure associated to the recurrent extension. Its proof follows the same arguments as those used in  Lemma 2 in \cite{Rivero2005} for positive self-similar Markov processes. So, we omit its proof.

\begin{lem}
\label{lem: excprop}
Let $\mathbf{n}$ be an excursion measure and $\overline{X}$ the associated recurrent extension of the minimal process. The following are equivalent:
\begin{enumerate}
\item[(i)] The process $\overline{X}$ satisfies the scaling property.

\item[(ii)] There exists $\gamma\in(0,1)$ such that, for any $c>0$ and  $f\in C_b(\RRa)$,
  \[
  \mathbf{n}\left( \int_0^{T_0} e^{-q s} f(X_s) \dd  s \right) = c^{\alpha(1-\gamma)} \mathbf{n}\left( \int_0^{T_0} e^{-q c^\alpha s} H_c f(X_s) \dd  s \right).
  \]

\item[(iii)] There exists $\gamma \in (0,1)$ such that, for any $c>0$ and $f\in C_b(\RRa)$,
  \[
  \mathbf{n}_sf = c^{-\alpha \gamma} \mathbf{n}_{s/c^{\alpha}} H_cf, \qquad \text{for all }\quad s>0.
  \]
\end{enumerate}
\end{lem}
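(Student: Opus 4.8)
The plan is to pivot on the intrinsic self-similarity of the excursion measure as defined above, namely
\[
(\star)\qquad H_c\mathbf{n}=c^{\gamma\alpha}\,\mathbf{n},\qquad c>0,
\]
($H_c$ acting on paths by $H_c(\omega)(t)=c\,\omega(c^{-\alpha}t)$), and to show that each of (i), (ii), (iii) is equivalent to $(\star)$. The step (ii)$\Leftrightarrow$(iii) is purely analytic: writing $\mathbf{n}\bigl(\int_0^{T_0}e^{-qs}f(X_s)\dd s\bigr)=\int_0^\infty e^{-qs}\mathbf{n}_sf\,\dd s$ and substituting $u=c^\alpha s$ on the right-hand side of (ii), the identity in (ii) becomes the equality of two Laplace transforms in $q$; by uniqueness of Laplace transforms and the $s$-continuity of the entrance law this is exactly (iii).

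For $(\star)\Leftrightarrow$(iii) I would exploit that the minimal process is itself self-similar, so its killed semigroup satisfies $H_cQ_t=Q_{c^{-\alpha}t}H_c$, equivalently $Q_sH_c=H_cQ_{c^\alpha s}$, while $H_c$ is multiplicative on functions. As an excursion measure is determined by its entrance law through the Markov property in its definition, it suffices to test $(\star)$ on cylinder functionals $F(\omega)=\prod_{i=1}^{k}f_i(\omega(t_i))\Ind_{\{t_k<T_0\}}$. From $T_0(H_c\omega)=c^\alpha T_0(\omega)$ and $(H_c\omega)(t_i)=c\,\omega(c^{-\alpha}t_i)$ one gets $\int F\,\dd(H_c\mathbf{n})=\mathbf{n}_{s_1}(H_cg)$, where $s_i=c^{-\alpha}t_i$ and $g=f_1Q_{t_2-t_1}(f_2Q_{t_3-t_2}(\cdots))$, the chain of kernels collapsing via $Q_sH_c=H_cQ_{c^\alpha s}$ and multiplicativity. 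In this form (iii) reads $\mathbf{n}_{s_1}(H_cg)=c^{\gamma\alpha}\mathbf{n}_{t_1}g$, i.e.\ $\int F\,\dd(H_c\mathbf{n})=c^{\gamma\alpha}\int F\,\dd\mathbf{n}$, which is $(\star)$; taking $k=1$ recovers (iii) from $(\star)$.

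The hardest link is (i)$\Leftrightarrow(\star)$, which I would route through the resolvent $W^*_\lambda$ of the extension. After a time change inside the Laplace transform, the scaling property of $\overline X$ is equivalent to $W^*_\lambda f(cx)=c^\alpha W^*_{c^\alpha\lambda}(H_cf)(x)$ for all $x\in\RR$. Inserting the decomposition $W^*_\lambda f(x)=W_\lambda f(x)+\varphi_\lambda(x)W^*_\lambda f(0)$ of \eqref{iidentity-lemma}, together with the corresponding scalings of the minimal objects, $W_\lambda f(cx)=c^\alpha W_{c^\alpha\lambda}(H_cf)(x)$ and $\varphi_\lambda(cx)=\varphi_{c^\alpha\lambda}(x)$, the relation for $x\neq0$ cancels down to the single identity at the regular point,
\[
W^*_\lambda f(0)=c^\alpha W^*_{c^\alpha\lambda}(H_cf)(0).
\]
Since $W^*_\lambda f(0)=\mathbf{n}\bigl(\int_0^{T_0}e^{-\lambda s}f(X_s)\dd s\bigr)\big/\mathbf{n}(1-e^{-\lambda T_0})$, this identity is equivalent to (ii) exactly when $\lambda\mapsto\mathbf{n}(1-e^{-\lambda T_0})$ is a constant multiple of $\lambda^{\gamma}$.

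I expect this power law to be the main obstacle. The map $\lambda\mapsto\mathbf{n}(1-e^{-\lambda T_0})$ is the Laplace exponent of the inverse local time of $\overline X$ at $0$, with no drift because the extension leaves $0$ continuously. The scaling property of $\overline X$ renders the zero set of $\overline X$, and hence this inverse local time, self-similar, and a self-similar subordinator is stable of some index $\gamma\in(0,1)$; this forces $\mathbf{n}(1-e^{-\lambda T_0})=K\lambda^{\gamma}$ and at the same time pins down the index of self-similarity. Keeping track of the resulting normalisation of local time is precisely the point that must be argued as in Rivero \cite{Rivero2005}, and once it is settled the four statements are seen to be equivalent.
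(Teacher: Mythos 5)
Your proposal follows essentially the same route as the proof the paper has in mind (the paper omits the argument, deferring to Lemma 2 of Rivero \cite{Rivero2005}): uniqueness of Laplace transforms for (ii)$\Leftrightarrow$(iii), reduction of the scaling property of $\overline X$ to the single identity $W^*_\lambda f(0)=c^\alpha W^*_{c^\alpha\lambda}(H_cf)(0)$ via the decomposition \eqref{iidentity-lemma}, and identification of $\lambda\mapsto\mathbf{n}(1-e^{-\lambda T_0})$ as a power $K\lambda^{\gamma}$ through the self-similarity of the zero set and stability of the inverse local time. The extra pivot through $(\star)$ (cylinder functionals plus $Q_sH_c=H_cQ_{c^\alpha s}$) is sound but not needed for the three stated equivalences. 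Note also that in the direction (ii)$\Rightarrow$(i) the power law is free: letting $f\uparrow 1$ in (ii) gives $\mathbf{n}(1-e^{-\lambda T_0})=c^{-\alpha\gamma}\,\mathbf{n}\bigl(1-e^{-c^{\alpha}\lambda T_0}\bigr)$, which is exactly the required power law; the stable-subordinator input is only needed for (i)$\Rightarrow$(ii).

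One justification, however, must be repaired. You claim the inverse local time has no drift ``because the extension leaves $0$ continuously''. This is wrong on two counts. First, the drift of the inverse local time corresponds to the Lebesgue time $\overline X$ spends at $0$ (stagnancy), not to the manner in which excursions leave $0$: an extension leaving $0$ by jumps can perfectly well have a drift-free inverse local time, and a ``sticky'' extension leaving continuously would not. Second, the lemma makes no continuity-of-leaving assumption at all; the paper applies it precisely to extensions that leave $0$ by a jump (Lemma \ref{lem: excprop4}, Theorem \ref{thm: jumpext}), so an argument valid only in the continuous case would not cover the intended generality. The fix uses nothing beyond what you already invoke: write the Laplace exponent of the inverse local time as $\Phi(\lambda)=\delta\lambda+\mathbf{n}(1-e^{-\lambda T_0})$, with L\'evy measure the image of $\mathbf{n}$ under $T_0$; self-similarity of $\overline X$ makes the zero set self-similar, hence $\Phi(\lambda)=K\lambda^{\gamma}$ for some $\gamma\in[0,1]$; since the L\'evy measure is nontrivial ($\mathbf{n}$ has infinite mass and $\mathbf{n}(T_0=\infty)=0$ by recurrence), necessarily $\gamma\in(0,1)$, which forces $\delta=0$ and $\mathbf{n}(1-e^{-\lambda T_0})=K\lambda^{\gamma}$. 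Alternatively, zero sojourn at $0$ is already built into the association of $\overline X$ with $\mathbf{n}$ through \eqref{iidentity-lemma}, whose denominator carries no $\delta\lambda$ term. With this correction the proof is complete and coincides in substance with the one cited by the paper.
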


\begin{lem}
\label{lem: excprop2}
Let $\mathbf{n}$ be a normalized excursion measure and $\overline{X}$ the associated extension of the minimal process. Assume that one of the conditions in Lemma \ref{lem: excprop} holds. Then there exist finite constants $C_{\alpha, \gamma}^1, C_{\alpha, \gamma}^{-1}$ different from  zero such that
\begin{equation}
\label{sojmeas}
 \mathbf{n}\left( \int_0^{T_0} 1_{\{X_s\in \dd y\}} \dd s \right) = C_{\alpha, \gamma}^{[y]} \abs y^{\alpha-1-\gamma\alpha} \dd y, \quad y\in \RRa, 
\end{equation}
with $\gamma$ determined by (ii) of Lemma \ref{lem: excprop}. Furthermore, $C_{\alpha, \gamma}^1, C_{\alpha, \gamma}^{-1}$ satisfy
  \begin{equation}\label{identityexp2}
   C_{\alpha, \gamma}^1 \MAPE_{0,1}\Big[I^{-(1-\gamma)}\Big]+ C_{\alpha, \gamma}^{-1} \MAPE_{0,-1}\Big[ I^{-(1-\gamma)}\Big] = \frac{\alpha}{\Gamma(1-\gamma)},
  \end{equation}
where
  $$I = \int_0^\infty \exp\{\alpha \xi(t)\} \dd t.$$ 
As a consequence, $\MAPE_{0,1}\Big[I^{-(1-\gamma)}\Big],  \MAPE_{0,-1}\Big[ I^{-(1-\gamma)}\Big] < \infty$.
\end{lem}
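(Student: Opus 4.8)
The plan is to read off the power-law shape of the sojourn measure from the self-similarity of $\mathbf{n}$, and then to pin down the two constants by evaluating a single Laplace functional of $\mathbf{n}$ in two different ways.

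First I would record the scaling of the sojourn measure. Write $U(\dd y) := \mathbf{n}\big(\int_0^{T_0}\Ind_{\{X_s\in\dd y\}}\dd s\big)$ and, for $f\in C_b(\RRa)$, $F(\omega)=\int_0^{T_0}f(X_s)\dd s$. A change of variable $u=a^{-\alpha}s$ in $F\circ H_a$ gives $F\circ H_a=a^{\alpha}\int_0^{T_0}(H_af)(X_u)\dd u$, where $H_af(y)=f(ay)$. Since one of the conditions of Lemma~\ref{lem: excprop} holds, $\mathbf{n}$ is self-similar, $H_a\mathbf{n}=a^{\alpha\gamma}\mathbf{n}$, so $\mathbf{n}(F\circ H_a)=a^{\alpha\gamma}\mathbf{n}(F)$ yields $U(H_cf)=c^{-\alpha(1-\gamma)}Uf$ for every $c>0$. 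A $\sigma$-finite measure on $\RRa$ transforming this way under the dilations $H_c$ (which preserve each half-line, as $c>0$) must be an absolutely continuous power law separately on $(0,\infty)$ and $(-\infty,0)$; matching exponents forces $U(\dd y)=C^{[y]}_{\alpha,\gamma}\abs y^{\alpha-1-\gamma\alpha}\dd y$ with $C^{1}_{\alpha,\gamma},C^{-1}_{\alpha,\gamma}\in[0,\infty]$, which is \eqref{sojmeas}.

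Next comes the key identity. Set $\varphi_\lambda(x):=\mathbf{E}_x[e^{-\lambda T_0}]$. Using the second defining property of the excursion measure with $H=e^{-\lambda T_0}$ and $t=s$, together with $T_0\circ\theta_s=T_0-s$ on $\{s<T_0\}$, I get $\mathbf{n}(\varphi_\lambda(X_s);s<T_0)=\mathbf{n}(e^{-\lambda(T_0-s)};s<T_0)$, and integrating in $s$ over $(0,\infty)$ (Tonelli) gives
\[
\int_{\RRa}\varphi_\lambda(x)\,U(\dd x)=\mathbf{n}\!\left(\int_0^{T_0}e^{-\lambda(T_0-s)}\dd s\right)=\mathbf{n}\!\left(\frac{1-e^{-\lambda T_0}}{\lambda}\right).
\]
On the other hand, applying $H_a\mathbf{n}=a^{\alpha\gamma}\mathbf{n}$ to $1-e^{-T_0}$ and using $T_0\circ H_a=a^{\alpha}T_0$ together with the normalisation $\mathbf{n}(1-e^{-T_0})=1$ gives $\mathbf{n}(1-e^{-\lambda T_0})=\lambda^{\gamma}$, so the right-hand side equals $\lambda^{\gamma-1}$. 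In particular this integral is finite, and bounding $\varphi_\lambda$ below by a positive constant on $\{1\le\abs x\le2\}$ shows $C^{\pm1}_{\alpha,\gamma}<\infty$.

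Finally I would evaluate the left-hand side explicitly. By the Lamperti--Kiu representation the minimal process issued from $x$ satisfies $T_0=\abs x^{\alpha}I$ with $(\xi,J)$ started at $(0,[x])$, hence $\varphi_\lambda(x)=\MAPE_{0,[x]}\big[e^{-\lambda\abs x^{\alpha}I}\big]$. Substituting the power law \eqref{sojmeas}, applying Tonelli, and making the change of variable $w=\lambda I\abs x^{\alpha}$ collapses the $\abs x$-integral to $\int_0^\infty e^{-w}w^{-\gamma}\dd w=\Gamma(1-\gamma)$, giving
\[
\int_{\RRa}\varphi_\lambda\,U(\dd x)=\frac{\Gamma(1-\gamma)}{\alpha}\,\lambda^{\gamma-1}\Big(C^{1}_{\alpha,\gamma}\MAPE_{0,1}[I^{-(1-\gamma)}]+C^{-1}_{\alpha,\gamma}\MAPE_{0,-1}[I^{-(1-\gamma)}]\Big).
\]
Comparing with $\lambda^{\gamma-1}$ and cancelling yields \eqref{identityexp2}. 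The constants are nonzero because the excursion charges each half-line, i.e.\ spends a positive amount of time on $(0,\infty)$ and on $(-\infty,0)$ (the modulating chain $J$ visits both states); granting $C^{\pm1}_{\alpha,\gamma}\neq0$, the displayed identity has finite right-hand side and nonnegative summands, so each term is finite and $\MAPE_{0,1}[I^{-(1-\gamma)}],\MAPE_{0,-1}[I^{-(1-\gamma)}]<\infty$, which is the final assertion. The step needing most care is precisely this nondegeneracy $C^{\pm1}_{\alpha,\gamma}\neq0$; the passage from scaling to a power law and the Markov-property manipulation are routine once \eqref{sojmeas} and the normalisation are in hand.
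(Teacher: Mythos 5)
Your proof is correct, and its two pillars are the same as the paper's: scaling forces the sojourn measure into the two-sided power law \eqref{sojmeas}, and then the Markov property of $\mathbf{n}$, the normalisation $\mathbf{n}(1-e^{-T_0})=1$, and the Lamperti--Kiu identification of $T_0$ under $\stP_x$ with $\abs{x}^{\alpha}I$ under $\MAPP_{0,[x]}$ yield \eqref{identityexp2} by essentially the same Fubini/change-of-variables computation you carry out (the paper does it at $\lambda=1$; your general $\lambda$, with $\mathbf{n}(1-e^{-\lambda T_0})=\lambda^{\gamma}$, is harmless but adds nothing). The one genuine difference is how the power law is obtained: you pass to a multiplicatively invariant measure and invoke uniqueness of Haar measure on $(0,\infty)$ --- the argument the paper reserves for the jumping-in measures in Lemma \ref{lem: excprop4} --- whereas the paper integrates the entrance-law scaling of Lemma \ref{lem: excprop}(iii), writing $\int_0^\infty \mathbf{n}_s f\,\dd s=\int_0^\infty s^{-\gamma}\mathbf{n}_1(H_{s^{1/\alpha}}f)\,\dd s$ and changing variables; the latter buys explicit constants $C^{1}_{\alpha,\gamma}=\alpha\int_0^\infty z^{-\alpha(1-\gamma)}\mathbf{n}_1(\dd z)$ and $C^{-1}_{\alpha,\gamma}=\alpha\int_{-\infty}^0(-z)^{-\alpha(1-\gamma)}\mathbf{n}_1(\dd z)$, while your route needs (and you supply) the separate annulus lower bound on $\varphi_\lambda$ to get finiteness. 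Finally, the point you flag as delicate --- that $C^{\pm 1}_{\alpha,\gamma}\neq 0$, without which the concluding finiteness of $\MAPE_{0,1}[I^{-(1-\gamma)}]$ and $\MAPE_{0,-1}[I^{-(1-\gamma)}]$ does not follow from \eqref{identityexp2} --- is indeed only sketched in your write-up, but the paper's own proof does not address it at all; a complete argument would show that $\mathbf{n}$ gives positive mass to paths visiting each half-line (clear when $Q$ is irreducible, via the Markov property of $\mathbf{n}$), so your proposal is no weaker than the printed proof on this point.
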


\begin{proof}
Recall that the sojourn measure
  \[
  \mathbf{n}\left( \int_0^{T_0} 1_{\{X_s\in \dd y\}} \dd s \right) = \int_0^\infty \mathbf{n}_s(\dd y) \dd s
  \]
is a $\sigma$-finite measure on $\RRa$ and is the unique excessive measure for the semigroup of the process $\overline{X}$. Now, using  part (iii) from Lemma \ref{lem: excprop} and Fubini's theorem, we obtain for $f\geq 0$ measurable:
\begin{eqnarray*}
\int_0^\infty \mathbf{n}_sf \dd s &=& \int_0^\infty s^{-\gamma} \mathbf{n}_1 (H_{s^{1/\alpha}} f) \dd s \\
&=&
\int_{\RRa} \mathbf{n}_1(\dd z)\int_0^\infty  \dd s \, s^{-\gamma} f(s^{1/\alpha} z)  \\
&=&
C_{\alpha, \gamma}^1 \int_0^\infty   f(u) u^{\alpha-1-\gamma\alpha} \dd u + C_{\alpha, \gamma}^{-1} \int_{-\infty}^0 f(u) (-u)^{\alpha-1-\gamma\alpha} \dd u,
\end{eqnarray*}
where in the last identity we have performed the change of variables $u=s^{1/\alpha} z$,
\[
C_{\alpha, \gamma}^1:=\alpha \int_0^\infty z^{-\alpha(1-\gamma)}\mathbf{n}_1 (\dd z)\qquad \textrm{and}\qquad C_{\alpha, \gamma}^{-1}:=\alpha \int_{-\infty}^0 (-z)^{-\alpha(1-\gamma)}\mathbf{n}_1 (\dd z).
\]
Thus, it holds the following representation of the sojourn measure, 
  \[
  \mathbf{n}\left( \int_0^{T_0} f(X_s) \dd s \right) = \int_0^\infty \mathbf{n}_sf \dd s = \int_{ \RRa}  f(u) C_{\alpha, \gamma}^{[u]} \abs u^{\alpha-1-\alpha\gamma} \dd u, 
  \]
which implies (\ref{sojmeas}).  

Next, we observe that the function $\varphi(x) = \mathbf{E}_x[e^{-T_0}]$ is integrable with respect to the sojourn measure since from the Markov property of $\mathbf{n}$, we have
\[
\begin{split}
\mathbf{n}\left( \int_0^{T_0} \varphi(X_s) \dd s\right) & = 
\int_0^\infty \mathbf{n}(e^{-(T_0-s)}, s<T_0) \dd s= \mathbf{n}(1-e^{-T_0}) = 1. 
\end{split}
\]  
On the other hand, using the representation of the sojourn measure, Fubini's theorem and  the fact that  $T_0$ under $\stP_y$ has the same law as $\abs y ^{\alpha} I$ under $\MAPP_{0, [y]}$, we deduce
\[
\begin{split}
1&=\mathbf{n}\left( \int_0^{T_0} \varphi(X_s) \dd s\right) = \int_{  \RRa } \mathbf{E}_y[e^{-T_0}] C_{\alpha, \gamma}^{[y]} \abs y^{\alpha - 1 - \alpha\gamma} \dd y \\
&= C_{\alpha, \gamma}^1  \MAPE_{0,1}\left[ \int_0^\infty e^{-y^\alpha I} y^{\alpha - 1 - \alpha\gamma} \dd y \right] +  C_{\alpha, \gamma}^{-1}  \MAPE_{0,-1} \left[ \int_{-\infty}^0 e^{-(-y)^\alpha I} (-y)^{\alpha - 1 - \alpha\gamma} \dd y \right] \\
&= \frac{\Gamma(1-\gamma)}{\alpha} \Big[ C_{\alpha, \gamma}^1 \MAPE_{0,1}[I^{-(1-\gamma)}] + C_{\alpha, \gamma}^{-1}\MAPE_{0,-1} [I^{-(1-\gamma)}] \Big].
\end{split}
\] 
Therefore, $\MAPE_{0,i}[I^{-(1-\gamma)}]<\infty$, $i=-1,1$ and identity (\ref{identityexp2}) holds. This ends the proof.  
\end{proof}

Voulle-Apiala \cite{Vuolle1994} and Rivero \cite{Rivero2007}  proved that any pssMp for which 0 is a regular and recurrent state either leaves 0 continuously or by jumps. The same occurs in the real-valued case. This is stated in the following lemma and its proof is similar to the one provided in \cite{Rivero2007}. For the sake of completeness, we provide its proof.

\begin{lem}
\label{lem: excprop3}
Let $\mathbf{n}$ be a self-similar excursion measure compatible with $(X, \stP)$ and with  self-similarity index $\gamma \in (0,1)$. Then,
  \[
  \text{either }\quad \mathbf{n}(X_{0+}\neq 0) = 0 \quad \text{ or } \quad  \mathbf{n}(X_{0+}=0)=0.
  \]
\end{lem}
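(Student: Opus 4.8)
The statement is a dichotomy: a self-similar excursion measure $\mathbf{n}$ either charges only paths that leave $0$ continuously, or only paths that leave $0$ by a jump; there is no genuine mixture. The natural strategy is to exploit the self-similarity of $\mathbf{n}$ together with the Markov (excursion) property to show that the quantity $\mathbf{n}(X_{0+}\neq 0)$ is forced to be either $0$ or $\infty$, and that the finite intermediate case is incompatible with the scaling. First I would introduce the event $A=\{X_{0+}\neq 0\}$ and its complement and observe that, because $\mathbf{n}$ is carried by paths with $T_0>0$, the starting point $X_{0+}$ is well defined $\mathbf{n}$-a.e. as an element of $\RRa\cup\{0\}$.

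The key step is to translate the scaling property of $\mathbf{n}$, as stated in Lemma~\ref{lem: excprop}(iii) together with the definition $H_a\mathbf{n}=a^{\gamma\alpha}\mathbf{n}$, into a statement about the law of $X_{0+}$ under $\mathbf{n}$ restricted to $A$. Under the spatial rescaling $H_a(\omega)(t)=a\,\omega(a^{-\alpha}t)$ the starting value transforms as $X_{0+}\mapsto a X_{0+}$, so on the set $\{X_{0+}\neq 0\}$ the self-similarity identity $H_a\mathbf{n}=a^{\gamma\alpha}\mathbf{n}$ forces the push-forward measure $\nu(\dd x):=\mathbf{n}(X_{0+}\in\dd x,\ X_{0+}\neq 0)$ to satisfy a homogeneity relation of the form $\nu(a\,\dd x)=a^{\gamma\alpha}\nu(\dd x)$. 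Such a measure is necessarily of the form $c^{[x]}\abs{x}^{-\gamma\alpha-1}\dd x$ on $\RRa$, whose total mass is either $0$ (both constants vanish) or $+\infty$. Thus $\mathbf{n}(X_{0+}\neq 0)\in\{0,+\infty\}$.

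It remains to rule out $\mathbf{n}(X_{0+}\neq 0)=+\infty$ \emph{simultaneously} with $\mathbf{n}(X_{0+}=0)>0$; more precisely, I must show that if $\mathbf{n}$ charges the jumping-in paths at all (so that the homogeneous jump-in intensity above is nonzero), then it cannot also charge the continuously-starting paths, and conversely. Here I would use the Markov property of the excursion measure (property 2 of the excursion-measure definition) to decompose $\mathbf{n}$ according to the behaviour at time $0+$: on $\{X_{0+}\neq 0\}$ the excursion, after its initial jump to a point $x\in\RRa$, evolves as the minimal process $(X,\stP_x)$, whereas on $\{X_{0+}=0\}$ the entrance is governed by a genuine entrance law from $0$. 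The scaling index $\gamma$ of $\mathbf{n}$ pins down, via Lemma~\ref{lem: excprop2}, the exponent of the sojourn measure and hence of the entrance law $\mathbf{n}_s$; the same index cannot support both a nondegenerate jumping-in measure and a nondegenerate continuous entrance law, because matching the homogeneity forces one of the two contributions to carry all the mass while the other integrates to a contradiction against property~3 (or against the finiteness $\mathbf{n}(1-e^{-T_0})<\infty$ after normalisation).

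\textbf{Main obstacle.}
The delicate point is the last step: the homogeneity argument alone shows each piece is $0$ or $\infty$, but disentangling the two pieces and showing they cannot coexist requires a careful use of the excursion Markov property to express $\mathbf{n}$ restricted to $\{X_{0+}\neq 0\}$ as an integral $\int_{\RRa}\nu(\dd x)\,\stP_x$ of minimal laws and comparing it against the continuous entrance law. I expect the bookkeeping of which exponent of $\abs{x}$ is compatible with a \emph{finite} value of $\mathbf{n}(1-e^{-T_0})$, and the verification that a strictly positive jumping-in intensity is incompatible with a strictly positive continuous entrance contribution under a single self-similarity index $\gamma$, to be the technical heart of the argument; this is exactly where the analogue in Rivero \cite{Rivero2007} for the positive case must be adapted to the two-sided, MAP-driven setting.
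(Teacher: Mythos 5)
Your first step is sound and in effect reproduces Lemma \ref{lem: excprop4} of the paper: the scaling property forces the law of $X_{0+}$ under $\mathbf{n}$ restricted to $\{X_{0+}\neq 0\}$ to be of the homogeneous form $c^{[x]}\abs{x}^{-\alpha\gamma-1}\dd x$, so the jump part has mass $0$ or $\infty$. But this alone carries no dichotomy — an excursion measure has infinite total mass anyway — and the whole content of the lemma lies in the step you defer, which your sketch does not actually supply. The mechanism you propose (that the two contributions cannot both be compatible with property 3, i.e.\ with $\mathbf{n}(1-e^{-T_0})<\infty$) does not work as stated: writing $\beta=\alpha\gamma$, the jump part integrates $1-e^{-T_0}$ finitely precisely when $\MAPE_{0,i}[I^{\beta/\alpha}]<\infty$, equivalently $\kappa(\beta)<0$ (Lemma \ref{lem: funcexpmom} together with the computation in the proof of Theorem \ref{thm: jumpext}), and nothing in your argument prevents a continuously-entering piece with the same index $\gamma$ from also having finite mass against $1-e^{-T_0}$. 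To exclude that coexistence you would need the implication ``a self-similar excursion measure of index $\gamma$ leaving $0$ continuously can exist only if $\kappa(\alpha\gamma)=0$'', which is the hard direction of Theorem \ref{thm: ext}; but in the paper that implication is itself deduced \emph{from} the present lemma (the proof of Theorem \ref{thm: ext} first shows $\kappa(\theta)\le 0$ and then invokes Lemma \ref{lem: excprop3} to rule out $\kappa(\theta)<0$), so your route is circular as it stands.

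The paper's proof avoids this entirely and is worth contrasting. Assuming both events are charged, normalize the two restrictions so that $\mathbf{n}^c(1-e^{-T_0})=\mathbf{n}^j(1-e^{-T_0})=1$; each is then a self-similar excursion measure compatible with $(X,\stP)$ with the \emph{same} index $\gamma$, so by Lemma \ref{lem: excprop2} their sojourn (potential) measures are the same purely excessive measure $C_{\alpha,\gamma}^{[y]}\abs{y}^{\alpha-1-\gamma\alpha}\dd y$. Uniqueness of purely excessive measures (Getoor) then forces their entrance laws to coincide, and uniqueness of an excursion measure with a given entrance law (Blumenthal) forces $\mathbf{n}^c=\mathbf{n}^j$ — impossible, since these measures have disjoint supports. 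To repair your proposal you would have to either prove the ``continuous entrance $\Rightarrow$ Cram\'er at $\alpha\gamma$'' implication independently of the dichotomy, or switch to this potential-measure/uniqueness argument.
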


\begin{proof}
We proceed by contradiction. Suppose that our claim is not true. Let 
\[
\mathbf{n}^c := c^{(c)}\mathbf{n}_{|_{\{X_{0+}=0\}}}\qquad \textrm{and}\qquad \mathbf{n}^j := c^{(j)} \mathbf{n}_{|_{\{X_{0+}\neq 0\}}},
\]
be the restriction of $\mathbf{n}$ to the set of trajectories $\{X_{0+}=0\}$ and $\{X_{0+}\neq 0\}$, respectively, and $c^{(c)}$, $c^{(j)}$ be normalizing constants such that
  \[
  \mathbf{n}^c(1-e^{-T_0}) = \mathbf{n}^j(1-e^{-T_0}) = 1.
  \]
The measures $\mathbf{ n}^c$ and $\mathbf{ n}^j$ are self-similar excursion measures compatible with $(X, \stP)$ and with the same self-similarity index $\gamma$. According to Lemma \ref{lem: excprop2}, the potential measure $\mathbf{n}^c$ and that of $\mathbf{n}^j$ are given by the same purely excessive measure. In other words,
  \[
  \mathbf{n}^c\left( \int_0^{T_0} \mathbf{1}_{\{X_s \in \dd y \}} \dd s \right) = C_{\alpha, \gamma}^{[y]} \abs{y}^{\alpha-1-\gamma\alpha} \dd y =
  \mathbf{n}^j\left( \int_0^{T_0} \mathbf{1}_{\{X_s \in \dd y\}} \dd s \right),
  \]
where $C_{\alpha, \gamma}^1, C_{\alpha, \gamma}^{-1}$ are constants satisfying $0<C_{\alpha, \gamma}^1 + C_{\alpha, \gamma}^{-1}<\infty$. So, by Theorem 5.25 in Getoor \cite{Getoor1990} on the uniqueness of purely excessive measures, the entrance laws associated with $\mathbf{n}^c$ and $\mathbf{n}^j$ are equal. Hence, by Theorem 4.7 of Chapter V in \cite{Blumenthal92}, the measures $\mathbf{n}^c$ and $\mathbf{n}^j$ are equal. This leads to a contradiction since the supports of the measures $\mathbf{n}^c$ and $\mathbf{n}^j$ are disjoint. 
\end{proof}

The last result of this section characterizes the form of the jumping measures associated to a self-similar excursion measure of recurrent extension of rssMp. Its proof  follows similar arguments as those used  by Voulle-Apiala in \cite{Vuolle1994} where this result is established for pssMp.

\begin{lem}
\label{lem: excprop4}
The only possible jumping-in measures such that the associated excursion measure satisfies (ii) in Lemma \ref{lem: excprop} are of the type
  \[ \eta(\dd x) = b_{\alpha, \beta}^1 x^{-(1+\beta)}\dd x + b_{\alpha, \beta}^{-1} (-x)^{-(\beta+1)} \dd x, \quad \textrm{for} \quad x\neq 0, 0<\beta < \alpha.
  \]
\end{lem}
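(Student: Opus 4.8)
The goal is to identify the possible jumping-in measures for a self-similar excursion measure leaving $0$ by a jump. I would start from the observation that the jumping-in measure $\eta$ is, by definition, the law of $X_{0+}$ under $\mathbf{n}$: writing $\eta(\dd x) = \mathbf{n}(X_{0+} \in \dd x)$, the pseudo-excursion measure is obtained by starting the minimal process from $\eta$, i.e.\ $\mathbf{n} = \int_{\RRa} \eta(\dd x)\, \stP_x$. The whole argument then hinges on translating the self-similarity of $\mathbf{n}$ into a scaling constraint on $\eta$, and reading off the only measures on $\RRa$ that satisfy it.

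\textbf{Key steps.} First I would express the scaling relation in Lemma~\ref{lem: excprop}(ii) in terms of $\eta$. Using the Markov property under $\mathbf{n}$ to condition on $X_{0+}$, the left-hand functional becomes
\[
\mathbf{n}\left( \int_0^{T_0} e^{-qs} f(X_s)\,\dd s \right) = \int_{\RRa} \eta(\dd x)\, \mathbf{E}_x\left[ \int_0^{T_0} e^{-qs} f(X_s)\,\dd s \right].
\]
Next I would exploit the scaling property \eqref{eq: self-similarity} of the minimal process itself: under $\stP_x$ the path $(X_s)$ has the same law as $(|x| X^{(1)}_{s|x|^{-\alpha}})$ where $X^{(1)}$ starts from the sign of $x$, so the inner expectation scales in a controlled way under $x \mapsto cx$. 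Substituting this into the displayed identity of Lemma~\ref{lem: excprop}(ii) and performing the change of variables $x \mapsto cx$ in the integral against $\eta$, I would obtain an equation that forces the image measure $H_c^{-1}\eta$ (the pushforward under multiplication by $c$) to equal $c^{-\beta}\eta$ for a suitable exponent $\beta$ related to $\gamma$ by $\beta = \alpha\gamma$; the constraint $\gamma \in (0,1)$ then yields $\beta \in (0,\alpha)$.

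\textbf{The main obstacle.} The crux is the scaling characterization of $\eta$ itself: a $\sigma$-finite measure $\eta$ on $\RRa$ satisfying $\eta(c\,\cdot) = c^{-\beta}\eta(\cdot)$ for all $c>0$ must be a sum of two homogeneous pieces, one on $(0,\infty)$ and one on $(-\infty,0)$, each proportional to $|x|^{-(1+\beta)}\dd x$. I would establish this by restricting to the positive and negative half-lines separately (the sign is preserved by $x \mapsto cx$), applying the standard fact that a scaling-invariant measure on $(0,\infty)$ is a multiple of Haar measure $\dd x / x$ for the multiplicative group, and tracking the power $c^{-\beta}$ to pin down the density exponent. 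This gives exactly
\[
\eta(\dd x) = b_{\alpha,\beta}^1 x^{-(1+\beta)}\dd x + b_{\alpha,\beta}^{-1}(-x)^{-(\beta+1)}\dd x, \qquad x \neq 0,
\]
with $0 < \beta < \alpha$. The delicate point worth care is verifying that $\eta$ is genuinely only determined up to the two separate constants on the two half-lines (rather than a single global constant), since the modulating chain $J$ allows different excursion intensities into the positive and negative directions; this asymmetry is precisely what produces the two distinct coefficients $b_{\alpha,\beta}^1$ and $b_{\alpha,\beta}^{-1}$.
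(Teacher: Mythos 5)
Your proposal is correct and reaches the paper's conclusion, but it derives the crucial scaling relation for $\eta$ by a genuinely different route; the final ingredient — uniqueness of Haar measure on the multiplicative group $(0,\infty)$, applied separately on each half-line because scaling by $c>0$ preserves signs, which is exactly what produces the two independent constants $b^1_{\alpha,\beta}$ and $b^{-1}_{\alpha,\beta}$ — is the same in both arguments. The paper never writes $\mathbf{n}=\stP_\eta$: it invokes Blumenthal's decomposition of the entrance law, $\mathbf{n}_s=\theta_s+\eta P_s^0$, together with the vague convergence $\lim_{s\to 0}\mathbf{n}_s g=\eta g$ for $g$ compactly supported in $\RRa$, and then simply lets $s\to 0$ in the scaling identity (iii) of Lemma \ref{lem: excprop} to obtain $\eta(c\,\cdot)=c^{-\alpha\gamma}\eta(\cdot)$ in one line. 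You instead work with (ii): you represent $\mathbf{n}$ as $\int_{\RRa}\eta(\dd x)\,\stP_x$ via the Markov property at $0+$, then use the scaling property \eqref{eq: self-similarity} of the minimal process and a change of variables. This is legitimate, but it costs two justifications the paper's route avoids. First, $\mathbf{n}=\int\eta(\dd x)\,\stP_x$ presupposes that $\mathbf{n}$ has no continuous part; this needs Lemma \ref{lem: excprop3}, or else you should restrict $\mathbf{n}$ to $\{X_{0+}\neq 0\}$ (harmless, since the self-similarity property is inherited by that restriction). Second, and more substantively, your change of variables only yields equality of integrals of resolvent-type functions $V^q f$ against $\eta$ and against its dilation, so to conclude equality of the measures themselves you must argue that this family is measure-determining — e.g.\ let $q\to\infty$ using $qV^qf\to f$, with a domination argument resting on $\int\eta(\dd x)\,\stE_x[1-e^{-T_0}]<\infty$, which matters because $\eta$ has infinite mass near $0$. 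In short: the paper's entrance-law argument is shorter and bypasses resolvent inversion, while yours is more self-contained probabilistically but should make that inversion step explicit.
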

\begin{proof}
According to Blumenthal \cite{Blumenthal92}, there exists a $\sigma$-finite measure $\eta$ on $\RRa$, such that, the entrance law $(\mathbf{n}_s, s>0)$ has the following representation
  \[
 \mathbf{n}_s = \theta_s + \eta P_s^0,
  \]
where $\theta_s$ is the entrance law for $(P_t, t\geq 0)$ with the additional property 
  $$\theta_s(U^c) \to 0, \quad \text{as }s\to 0, $$
for every neighbourhood $U$ of $0$. Furthermore, $\eta$ satisfies
  \[
  \lim_{s\to 0} \mathbf{n}_sg = \eta g, 
  \]
for all function $g$ with compact  support on $\RRa$.  From Lemma \ref{lem: excprop} (iii) and the previous identity, we deduce that for any $c>0$:
\[
\begin{split}
\lim_{s\to 0}  \int_{\RRa} g(x) \eta(-\dd x) =  \lim_{s\to 0}  \int_{\RRa} g(x) c^{\alpha \gamma} \eta(-c\dd x) .
\end{split}
\]
In particular, this shows that for every $c>0$, 
\begin{equation}
\label{etaprop-}
\eta(-\dd x) = c^{\alpha \gamma} \eta( -c\dd x ), \quad x\in \RRa. 
\end{equation}
Let $\beta=\alpha\gamma$ and define on $\RR^+$ the measure
  \[
  \nu(A) := \int_{A} x^\beta \eta(-\dd x), \quad A\in \mathbf{B}(\RR^+).
  \]
By (\ref{etaprop-}) we have that $\nu$ satisfies
\[
\begin{split}
\nu(yA) = \int_{yA} x^\beta \eta(-\dd x) =
\int_{A} (yx)^\beta \eta(-y\dd x) =
\int_{A} x^\beta \eta(-\dd x) = \nu(A).
\end{split}
\]
That is to say, the measure $\nu$ is left invariant on the group of positive real numbers under multiplication.  The uniqueness of left Haar measures  implies that there exists a constant $b_{\alpha, \beta}^{-1}$ such that
  $$\nu(\dd x) = b_{\alpha, \beta}^{-1} \frac{1}{x} \dd x, \quad  x> 0.$$
  We refer to chapter 9 in Cohn \cite{Cohn} (see Theorem 9.2.3 and Exercise 3) for these details on Haar measures.
The latter identity implies 
  $$\eta(\dd x) = b_{\alpha, \beta}^{-1}(-x)^{-(1+\beta)} \dd x, \quad x<0.$$
A similar procedure allow us to obtain that for $x>0$, $\eta(\dd x) = b_{\alpha, \beta}^1x^{-(1+\beta)} \dd x.$
This completes the proof.
\end{proof}

\section{Proofs}\label{proof}

\subsection{Existence of recurrent extensions}

The time reversal property (\ref{timereversal}), implies the  following duality result between the resolvents of the rssMp $X$ (associated to $\xi$) and its dual. We refer to Theorem 2  in Alili et al. \cite{ACGZ2016} for its proof and whose arguments follows similar  ideas to those developed in  Lemma 2 of  Bertoin and Yor \cite{BY2002} for the  positive case (i.e. pssMp).

Recall that for $q\ge 0$ and a measurable $f:\mathbb{R}\to \mathbb{R}$,  the resolvent operators are given by
\[
V^qf(x)=\mathbf{E}_x \left[ \int_0^\zeta e^{-qt} f(X_t) \dd t \right], \qquad \widehat{V}^qf(x)=\widehat{\mathbf{E}}_x \left[ \int_0^\zeta e^{-qt} f(X_t) \dd t \right]. 
\]

\begin{lem}\label{prop: wd} 
For every $q\ge 0$ and every measurable functions $f, g:\mathbb{R}\to \mathbb{R}$, we have
\begin{equation} \label{dualid}
\int_{-\infty}^{\infty}  f(x) V^qg(x) \mu(\mathrm{ d}x)=\int_{-\infty}^{\infty}  g(x) \widehat{V}^qf(x) \mu(\mathrm{ d}x),
\end{equation}
where
\[
\mu(\mathrm{ d} x):=|x|^{\alpha-1}\pi_{[x]}\mathrm{ d}x.
\]
\end{lem}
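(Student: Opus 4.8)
The plan is to reduce the identity to the time-reversal property \eqref{timereversal} of the underlying MAP, exactly as in the positive case treated in \cite{BY2002}. First I would rewrite both resolvents through the Lamperti--Kiu representation. Starting the MAP from $J(0)=[x]$, so that the sign of the visited point is carried by $J$ and $X_t=|x|e^{\xi(\tau(|x|^{-\alpha}t))}J(\tau(|x|^{-\alpha}t))$, I set $A_s=\int_0^s e^{\alpha\xi(u)}\,\dd u$ and perform the deterministic time substitution $t=|x|^\alpha A_s$, so that $\dd t=|x|^\alpha e^{\alpha\xi(s)}\,\dd s$ and $T_0=|x|^\alpha I$. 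This gives, for $x\in\RRa$,
\[
V^qg(x)=\MAPE_{0,[x]}\left[\int_0^\infty e^{-q|x|^\alpha A_s}\,g\big(|x|e^{\xi(s)}J(s)\big)\,|x|^\alpha e^{\alpha\xi(s)}\,\dd s\right],
\]
and the analogous formula for $\widehat V^q$ with $\MAPE$ replaced by $\MAPEh$. Here the endpoint $y=|x|e^{\xi(s)}J(s)$ has $[y]=J(s)$ and $|y|=|x|e^{\xi(s)}$.

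Next I would insert these expressions into the left-hand side of \eqref{dualid}. Since $\mu(\dd x)=|x|^{\alpha-1}\pi_{[x]}\,\dd x$, the weight $\pi_{[x]}$ is precisely what lets one absorb the starting state into the stationary initial law: using $\MAPE_{0,\pi}=\sum_i\pi_i\MAPE_{0,i}$,
\[
\int_{\RRa} f(x)V^qg(x)\,\mu(\dd x)=\int_{\RRa}|x|^{\alpha-1}f(x)\,\MAPE_{0,\pi}\Big[\mathbf 1_{\{J(0)=[x]\}}\int_0^\infty e^{-q|x|^\alpha A_s}\,g\big(|x|e^{\xi(s)}J(s)\big)\,|x|^\alpha e^{\alpha\xi(s)}\,\dd s\Big]\dd x.
\]
Then, for each fixed $s$, I would apply \eqref{timereversal}: writing the integrand as a functional of the reversed path $(\xi(s-\cdot)-\xi(s),J((s-\cdot)-))$ and using that under reversal $\xi(s)\mapsto-\xi(s)$, the endpoints are exchanged $J(0)\leftrightarrow J(s)$, and $A_s\mapsto e^{-\alpha\xi(s)}A_s$, converts the $\MAPE_{0,\pi}$-expectation into an $\MAPEh_{0,\pi}$-expectation of the reversed functional.

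Finally I would perform the spatial change of variables $y:=|x|e^{-\xi(s)}J(0)$, the image of the starting point under the reversed motion, for which $|y|=|x|e^{-\xi(s)}$, $[y]=J(0)$ and $\dd x=e^{\xi(s)}\,\dd y$. The crucial bookkeeping is that the powers of $e^{\xi(s)}$ coming from $|x|^{\alpha-1}=|y|^{\alpha-1}e^{(\alpha-1)\xi(s)}$ and from the Jacobian $e^{\xi(s)}$ combine to exactly $e^{\alpha\xi(s)}$, while $|x|^\alpha e^{-\alpha\xi(s)}=|y|^\alpha$; after the substitution the constraint becomes $\{J(0)=[y]\}$ and $f$ is evaluated at $|y|e^{\xi(s)}J(s)$. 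Unfolding $\pi_{[y]}$ from the stationary law then identifies the expression as $\int_{\RRa} g(y)\,\widehat V^qf(y)\,\mu(\dd y)$, which is the right-hand side of \eqref{dualid}.

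The main obstacle is the careful tracking of the modulating chain across the reversal: the reversal exchanges the initial and terminal states $J(0)\leftrightarrow J(s)$ together with the sign of the visited point, and one must verify that these exchanges are compatible with the spatial change of variables $x\leftrightarrow y$. It is exactly here that the invariant weights $\pi_{[x]}$ and the exponent $\alpha-1$ in $\mu$ are forced, being the unique choices that make the weighted occupation kernel symmetric under reversal so that the Jacobian computation closes; with any other reference measure the two sides would fail to match. The remaining technical points are the justification of Fubini's theorem and the treatment of the infinite MAP time-horizon together with the killing of $(\xi,J)$, which correspond to the finiteness of $T_0=|x|^\alpha I$ guaranteed by the standing hypothesis $\mathbf{P}_x(T_0<\infty)=1$.
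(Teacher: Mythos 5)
Your argument is correct and is essentially the proof the paper points to: the paper does not prove Lemma \ref{prop: wd} itself but defers to Theorem 2 of Alili et al.\ \cite{ACGZ2016}, whose proof is exactly this Bertoin--Yor-style combination of the Lamperti--Kiu time change with the time-reversal identity \eqref{timereversal} under the stationary initial law $\pi$, followed by the spatial change of variables. Your exponent bookkeeping (the factors $e^{(\alpha-1)\xi(s)}$ and $e^{\xi(s)}$ combining to $e^{\alpha\xi(s)}$, and $|x|^{\alpha}e^{-\alpha\xi(s)}=|y|^{\alpha}$) and the exchange $J(0)\leftrightarrow J(s)$ are exactly right, so nothing is missing beyond the routine Fubini and left-limit justifications you already flag.
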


In order to establish weak duality for rssMp  an invariant function for the semigroup of the process killed at its first hitting time of zero is needed. The invariant function is given below and was determined by  Kyprianou et al.  \cite{KRS2015} (see Theorem 2.1). Its proof relies on the Lamperti-Kiu representation for rssMp and the optional stopping theorem.

\begin{lem} \label{lem: invfun}
Assume that Assumption \ref{Cramer} holds and denote by $\theta$ and $v$  for the Cram\'er number and its associated vector. Define the function $h:\mathbb{R} \to [0, \infty)$ by
  $$h(x) = | x |^\theta v_{[x]}(\theta).$$
 Then $h$ is an invariant function for the semigroup of the rssMp killed at its  first hitting time of the point zero, here denoted by $(P_t^0)_{t\ge 0}$.
\end{lem}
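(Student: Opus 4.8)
The plan is to verify directly the invariance identity $P_t^0 h(x) = h(x)$ for every $x \in \RRa$ and $t \ge 0$, transporting the computation to the driving MAP $(\xi,J)$ through the Lamperti--Kiu representation and then recognising the Perron--Frobenius (Wald) martingale $M(\cdot,\theta)$ appearing in \eqref{expchange}. Fix $x \in \RRa$ and put $s = |x|^{-\alpha}t$. Under $\stP_x$ the Lamperti--Kiu representation yields $|X_t| = |x|\exp\{\xi(\tau(s))\}$ and $[X_t] = J(\tau(s))$, where $(\xi,J)$ is the associated MAP issued from $(0,[x])$ under $\MAPP_{0,[x]}$ and $\tau(s) = \inf\{u \ge 0 : \int_0^u \exp\{\alpha\xi(r)\}\,\dd r \ge s\}$. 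Since $T_0$ under $\stP_x$ has the law of $|x|^\alpha I$ under $\MAPP_{0,[x]}$, the event $\{t < T_0\}$ corresponds to $\{\tau(s) < \infty\} = \{s < I\}$; moreover, as $u \mapsto \int_0^u \exp\{\alpha\xi(r)\}\,\dd r$ is continuous, nondecreasing and adapted, $\tau(s)$ is a stopping time for the filtration of $(\xi,J)$.

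Next I would use the Cram\'er condition to linearise. By definition $h(X_t) = |X_t|^\theta v_{[X_t]}(\theta)$, so on $\{t<T_0\}$
\[
h(X_t)\Ind_{\{t<T_0\}} = |x|^\theta \exp\{\theta\xi(\tau(s))\}\, v_{J(\tau(s))}(\theta)\,\Ind_{\{s<I\}}.
\]
Because Assumption~\ref{Cramer} gives $\kappa(\theta)=0$, the Wald martingale reads $M(t,\theta) = \exp\{\theta\xi(t)\}\,v_{J(t)}(\theta)/v_{J(0)}(\theta)$, whence $\exp\{\theta\xi(\tau(s))\}\,v_{J(\tau(s))}(\theta) = v_{[x]}(\theta)\,M(\tau(s),\theta)$ and therefore $h(X_t)\Ind_{\{t<T_0\}} = h(x)\,M(\tau(s),\theta)\Ind_{\{\tau(s)<\infty\}}$. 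Taking expectations,
\[
P_t^0 h(x) = h(x)\,\MAPE_{0,[x]}\!\left[M(\tau(s),\theta);\,\tau(s)<\infty\right].
\]
Applying the optional stopping theorem to the change of measure \eqref{expchange} with $\gamma=\theta$, restricted to $\{\tau(s)<\infty\}\in\GG_{\tau(s)}$, the remaining expectation equals $\MAPPc{\theta}_{0,[x]}(\tau(s)<\infty) = \MAPPc{\theta}_{0,[x]}(s<I)$, so that the claim reduces to showing this probability is $1$.

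The main obstacle, and the only genuinely analytic point, is to prove that $I=\infty$ holds $\MAPPc{\theta}$-a.s., equivalently that under the tilted law $\xi$ does not drift to $-\infty$. Here I would invoke the fact, recalled after Assumption~\ref{Cramer}, that $\kappa$ is convex with $\kappa(z)\le 0$ on $(0,\theta]$ and $\kappa(\theta)=0$; convexity then forces $\kappa'(\theta)\ge 0$. Since under $\MAPPc{\theta}$ the MAP has leading eigenvalue $\kappa^{(\theta)}(z)=\kappa(z+\theta)-\kappa(\theta)=\kappa(z+\theta)$, its asymptotic drift equals $(\kappa^{(\theta)})'(0)=\kappa'(\theta)\ge 0$, so $\xi$ does not drift to $-\infty$ under $\MAPPc{\theta}$; consequently $I=\int_0^\infty\exp\{\alpha\xi(t)\}\,\dd t=\infty$ $\MAPPc{\theta}$-a.s., which gives $\MAPPc{\theta}_{0,[x]}(s<I)=1$ and hence $P_t^0 h(x)=h(x)$. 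As $x\in\RRa$ and $t\ge 0$ were arbitrary, $h$ is invariant for $(P_t^0)_{t\ge0}$, as claimed.
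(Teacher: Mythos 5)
Your proof is correct and follows essentially the route the paper itself points to: the paper gives no self-contained argument but defers to Theorem 2.1 of Kyprianou et al.\ \cite{KRS2015}, noting only that the proof ``relies on the Lamperti-Kiu representation for rssMp and the optional stopping theorem'', which is precisely your combination of the Wald martingale $M(\cdot,\theta)$ with $\kappa(\theta)=0$, the change of measure \eqref{expchange} evaluated at the stopping time $\tau(s)$, and the identification of the remaining factor as $\MAPPc{\theta}_{0,[x]}(s<I)=1$. The two facts you use without proof---validity of the measure change on $\{\tau(s)<\infty\}$ for the mean-one martingale density, and that a MAP which does not drift to $-\infty$ (here because convexity of $\kappa$ forces a nonnegative drift $\kappa'(\theta-)\geq 0$ under the tilted law) has $I=\infty$ almost surely---are standard and are exactly the ingredients of the cited argument.
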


The proofs of Theorem \ref{thm: ext} and Theorem \ref{thm: jumpext} relies on the following  three technical  Lemmas. The first one is a linear algebra result  whose proof is included for  sake of completeness, the second provides a necessary and sufficient condition for the finiteness of $\MAPE_{0,i}[I^{s}]$, while the third establishes the conditions which are required by  Theorem \ref{thm: exex}.

Recall that $\MAPPs:=\MAPPc{\theta}$, where $\MAPPc{\theta}$ is  defined by the exponential change of measure introduced in (\ref{expchange}) with $\theta$ being  the Cram\'er number.

\begin{lem}
\label{lem: eigenposmat}
Let $A$ be a $2\times 2$ matrix with real eigenvalues $\lambda_1\leq \lambda_2$. If $\mathrm{tr}(A)\leq 2$ and $\det(I-A)\geq 0$, then $\lambda_2\leq 1$ and $\lambda_2 <1$ holds whenever $\det(I-A)>0$.
\end{lem}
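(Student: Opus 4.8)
The plan is to reduce everything to the two symmetric functions of the eigenvalues. Since $A$ is a $2\times 2$ matrix with real eigenvalues $\lambda_1\le\lambda_2$, the matrix $I-A$ has eigenvalues $1-\lambda_1$ and $1-\lambda_2$, so that
\[
\det(I-A)=(1-\lambda_1)(1-\lambda_2),\qquad \mathrm{tr}(A)=\lambda_1+\lambda_2.
\]
In these terms the two hypotheses read $\lambda_1+\lambda_2\le 2$ and $(1-\lambda_1)(1-\lambda_2)\ge 0$, and both conclusions reduce to an elementary sign analysis of the pair $(\lambda_1,\lambda_2)$.

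To prove $\lambda_2\le 1$ I would argue by contradiction. Suppose $\lambda_2>1$, so that $1-\lambda_2<0$. For the product $(1-\lambda_1)(1-\lambda_2)$ to be nonnegative, the factor $1-\lambda_1$ must then be nonpositive, i.e.\ $\lambda_1\ge 1$. But then $\lambda_1+\lambda_2\ge 1+\lambda_2>1+1=2$, contradicting $\mathrm{tr}(A)\le 2$. Hence $\lambda_2\le 1$.

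For the strict conclusion, assume in addition $\det(I-A)>0$; the inequality just established still yields $\lambda_2\le 1$. If we had $\lambda_2=1$, then the factor $1-\lambda_2$ would vanish, forcing $(1-\lambda_1)(1-\lambda_2)=0$ and contradicting strict positivity. Therefore $\lambda_2\ne 1$, and combined with $\lambda_2\le 1$ this gives $\lambda_2<1$. There is no genuine obstacle here: the only step requiring a little care is the opening observation that $\det(I-A)$ factors as $(1-\lambda_1)(1-\lambda_2)$ over the real eigenvalues, after which both claims follow at once from the sign of a product of two reals together with the trace bound.
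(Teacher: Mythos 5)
Your proof is correct, and it takes a mildly but genuinely different route from the paper's. The paper works with the closed-form expression for the leading eigenvalue, $\lambda_2 = \bigl(\mathrm{tr}(A) + \sqrt{\mathrm{tr}^2(A) - 4\det(A)}\bigr)/2$, and converts the hypothesis $\det(I-A)\geq 0$ into the inequality $\abs{2-\mathrm{tr}(A)} \geq \sqrt{\mathrm{tr}^2(A)-4\det(A)}$, which together with $2-\mathrm{tr}(A)\geq 0$ yields the bound after rearranging; the strict case is handled the same way. You instead evaluate the characteristic polynomial at $1$, writing $\det(I-A)=(1-\lambda_1)(1-\lambda_2)$ and $\mathrm{tr}(A)=\lambda_1+\lambda_2$, and run a sign analysis by contradiction: if $\lambda_2>1$ then nonnegativity of the product forces $\lambda_1\geq 1$, violating the trace bound, and strict positivity of the product rules out $\lambda_2=1$. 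What your approach buys is the avoidance of the square root and absolute value entirely (no need to even invoke that real eigenvalues make the discriminant nonnegative), so the argument is purely about signs of two real factors; what the paper's approach buys is that it stays expressed in terms of $\mathrm{tr}(A)$ and $\det(A)$, the quantities that actually appear when the lemma is applied to the matrices $e^{F(\alpha s)}$ and $e^{F(\theta)t}$ later in the text. Both are complete and correct.
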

\begin{proof}
It is easy to see that $\det(I-A)\geq 0$ if and only if $|2-\mathrm{tr}(A) | \geq \sqrt{ \mathrm{tr}^2(A) - 4\det(A)}$, and since $2-\mathrm{tr}(A)\geq 0$ by hyphotesis, the latter inequality implies 
  $$\lambda_2 = \dfrac{ \mathrm{tr}(A) + \sqrt{ \mathrm{tr}^2(A) - 4\det(A)} }{2}\leq 1.$$ 
The second part follows using the same arguments.
\end{proof}
Recall that 
  $$I = \int_0^\infty \exp\{\alpha \xi(t)\} \mathrm{d} t.$$

\begin{lem}
\label{lem: funcexpmom}
Let $ ((\xi, J), \MAPP)$ be a MAP and $s\in(0, 1)$. Then, $\kappa(\alpha s)< 0$ if and only if $\MAPE_{0,i}[I^{s}]< \infty$, for $i=-1,1$. 
\end{lem}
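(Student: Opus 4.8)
The plan is to exploit the multiplicative renewal structure of $I$ at a fixed time. Writing $I_1=\int_0^1 e^{\alpha\xi(t)}\,\dd t$ and $I^{(1)}=\int_0^\infty e^{\alpha(\xi(1+u)-\xi(1))}\,\dd u$, the defining Markov property of $(\xi,J)$ gives the exact decomposition
\[
I = I_1 + e^{\alpha\xi(1)}\,I^{(1)},
\]
where, conditionally on $\GG_1$ with $J(1)=j$, the factor $I^{(1)}$ is independent of $\GG_1$ and distributed as $I$ under $\MAPP_{0,j}$. Setting $f_i(s)=\MAPE_{0,i}[I^s]$ and $f(s)=(f_1(s),f_{-1}(s))^T$, conditioning on $\GG_1$ and using $\MAPE_{0,i}[e^{\alpha s\xi(1)};J(1)=j]=\big(e^{F(\alpha s)}\big)_{ij}$ yields
\[
\MAPE_{0,i}\!\big[e^{\alpha s\xi(1)}(I^{(1)})^s\big]=\sum_{j\in\{\pm1\}}\big(e^{F(\alpha s)}\big)_{ij}f_j(s)=\big(e^{F(\alpha s)}f(s)\big)_i .
\]
This links the moments to the nonnegative matrix $e^{F(\alpha s)}$, whose spectral radius equals $e^{\kappa(\alpha s)}$ because $\kappa(\alpha s)$ is the leading eigenvalue of $F(\alpha s)$ and dominates the real parts of the others.

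For the implication $\kappa(\alpha s)<0\Rightarrow f_i(s)<\infty$, I would run the decomposition on the truncations $I_n=\int_0^n e^{\alpha\xi(t)}\,\dd t$, which are finite. From $I_n=I_1+e^{\alpha\xi(1)}I_{n-1}^{(1)}$ and the subadditivity $(a+b)^s\le a^s+b^s$, valid for $s\in(0,1)$, the vectors $g^{(n)}(s):=(\MAPE_{0,1}[I_n^s],\MAPE_{0,-1}[I_n^s])^T$ satisfy, componentwise,
\[
g^{(n)}(s)\le C + e^{F(\alpha s)}g^{(n-1)}(s),\qquad g^{(0)}(s)=0,
\]
with $C_i=\MAPE_{0,i}[I_1^s]\le \MAPE_{0,i}[e^{\alpha s\,\overline{\xi}_1}]$ and $\overline{\xi}_1=\sup_{t\le1}\xi(t)$; this is finite because finiteness of $F(\alpha s)$ yields an exponential moment of order $\alpha s$ for the running supremum of the MAP on $[0,1]$ (a standard estimate for Lévy processes, extended piecewise to the MAP). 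Iterating gives $g^{(n)}(s)\le\sum_{k=0}^{n-1}(e^{F(\alpha s)})^k C$. When $\kappa(\alpha s)<0$ the spectral radius $e^{\kappa(\alpha s)}$ is $<1$, so the Neumann series $\sum_{k\ge0}(e^{F(\alpha s)})^k=(\Id-e^{F(\alpha s)})^{-1}$ converges to a nonnegative matrix and the bounds are uniform in $n$; letting $n\uparrow\infty$ and using monotone convergence ($I_n\uparrow I$) gives $f_i(s)<\infty$.

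For the converse, assume $f_i(s)<\infty$ for both $i$. First I would check that $F(\alpha s)$ is finite: from $I\ge e^{\alpha\xi(1)}I^{(1)}$ one gets $f_i(s)\ge\big(e^{F(\alpha s)}\big)_{ij}f_j(s)$ for each $j$, and since $f_j(s)>0$ (as $I>0$ a.s.) every entry $\big(e^{F(\alpha s)}\big)_{ij}$ is finite, so $\kappa(\alpha s)$ is well defined. Taking expectations in the \emph{exact} decomposition and using $I^s>\big(e^{\alpha\xi(1)}I^{(1)}\big)^s$ pointwise (because $I_1>0$ and $x\mapsto x^s$ is strictly increasing), I obtain
\[
\big(\Id-e^{F(\alpha s)}\big)f(s)=r,\qquad r_i:=\MAPE_{0,i}\!\big[I^s-e^{\alpha s\xi(1)}(I^{(1)})^s\big]>0 .
\]
The main obstacle is to upgrade the spectral bound to a strict one, i.e.\ to exclude $\kappa(\alpha s)=0$, and I would resolve it with the left Perron--Frobenius eigenvector: the nonnegative matrix $e^{F(\alpha s)}$ admits a left eigenvector $u\ge0$, $u\neq0$, with $u^T e^{F(\alpha s)}=e^{\kappa(\alpha s)}u^T$. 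Pairing the displayed identity on the left with $u^T$ gives $\big(1-e^{\kappa(\alpha s)}\big)\,u^T f(s)=u^T r$; since $u\ge0$, $u\neq0$ and $f(s)>0$, $r>0$, both $u^T f(s)>0$ and $u^T r>0$, forcing $1-e^{\kappa(\alpha s)}>0$, that is $\kappa(\alpha s)<0$. This argument uses only a nonnegative Perron eigenvector, so it applies verbatim even when $Q$ is reducible (as in the spectrally negative case). Thus the only genuinely analytic input is the exponential moment of $\overline{\xi}_1$ used to bound $C$; the remaining steps are the algebra of the renewal identity together with the Perron--Frobenius comparison.
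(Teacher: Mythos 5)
Your proof is correct, and although it rests on the same time-$1$ Markov decomposition $I = I_1 + e^{\alpha\xi(1)}I^{(1)}$ that underlies the paper's argument, both halves are carried out by genuinely different means. For the implication $\kappa(\alpha s)<0 \Rightarrow \MAPE_{0,i}[I^{s}]<\infty$, the paper simply invokes the argument of Proposition 3.6 in \cite{KKPW2014}, whereas you give a self-contained proof by truncating to $I_n$, using $(a+b)^s\le a^s+b^s$, and summing the Neumann series $\sum_{k\ge 0}\bigl(e^{F(\alpha s)}\bigr)^k$, which converges precisely because the spectral radius $e^{\kappa(\alpha s)}$ is strictly less than $1$; the only analytic input is $\MAPE_{0,i}[I_1^{s}]<\infty$, which you obtain from the exponential moment of the running supremum $\overline{\xi}_1$ --- this is the one step you state loosely, but it is indeed standard for L\'evy processes (finiteness of $\MAPE[e^{\lambda\xi(1)}]$ implies that of $\MAPE[e^{\lambda\overline{\xi}_1}]$) and extends to the two-state MAP by conditioning on the switching times of $J$, whose number up to time $1$ has Poisson tails. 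For the converse, the paper extracts entrywise strict inequalities from the same decomposition and feeds the trace and determinant of $e^{F(\alpha s)}$ into its $2\times 2$ linear-algebra Lemma \ref{lem: eigenposmat}; you instead record the exact vector identity $\bigl(\Id - e^{F(\alpha s)}\bigr)f(s)=r$ with $r>0$ componentwise (legitimate, since the subtracted term has finite expectation, so the a.s.\ strict pointwise inequality passes to expectations) and pair it with a nonnegative left Perron eigenvector $u$ of the nonnegative matrix $e^{F(\alpha s)}$, which yields $1-e^{\kappa(\alpha s)} = u^{T}r/u^{T}f(s)>0$ in one stroke. As to what each approach buys: the paper's converse is elementary $2\times 2$ algebra requiring nothing beyond the definition of $\kappa$, while yours is dimension-free --- it would apply verbatim to a MAP with any finite number of modulating states, and, as you note, to reducible $Q$ --- and your forward direction removes the dependence on an external reference, at the modest price of the supremum-moment estimate.
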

\begin{proof}
 We first consider the MAP $((\alpha \xi, J), \MAPP)$ and observe that the same  arguments used in the proof of Proposition 3.6 in \cite{KKPW2014} provides the direct implication. For the reciprocal, we suppose that $\MAPE_{0,i}[I^s]<\infty$, $i=-1,1$. Thus, for $i=-1,1$, we have by the Markov property, 
\begin{eqnarray*}
\MAPE_{0,i}[I^{s}] &>& \MAPE_{0,i}\left[ \left( \int_1^\infty \exp\{\alpha \xi(u) \} \dd u \right)^{s} \right] \\
&=&
\MAPE_{0,i}\left[ \exp\{\alpha s\xi(1)\} \left( \int_0^\infty \exp\{\alpha(\xi(1+u) 
- \xi(1)) \} \dd u \right)^{s} \right] \\
&=&
\MAPE_{0,i}\Big[e^{\alpha s\xi(1)}; J(1) = i\Big] \MAPE_{0,i}[I^{s}] + \MAPE_{0,i}\Big[e^{\alpha s\xi(1)}; J(1) = j\Big] \MAPE_{0,j}[I^{s}], \quad j\neq i.
\end{eqnarray*}
From the latter, and  since all quantities are positive, it follows 
\begin{align*}
&\MAPE_{0,1}[I^{s}] (1-(e^{F(\alpha s)})_{11}) > (e^{F(\alpha s)})_{1-1} \MAPE_{0,-1}[I^{s}] > 0, \\
&\MAPE_{0,-1}[I^{s}] (1-(e^{F(\alpha s)})_{-1-1}) > (e^{F(\alpha s)})_{-11} \MAPE_{0,1}[I^{s}] >0. 
\end{align*}
The previous inequalities imply
\begin{align*} 
 &\hspace{2 cm} (e^{F(\alpha s) })_{11} < 1,  \quad (e^{F(\alpha s) })_{-1-1} <1, \\
 &\left( 1-(e^{F(\alpha s)})_{11} \right) \left( 1-(e^{F(\alpha s)})_{-1-1} \right) > (e^{F(\alpha s)})_{1-1} (e^{F(\alpha s)})_{-11}.
\end{align*} 
Putting all the pieces together, we deduce
\[
\mathrm{tr}(e^{F(\alpha s) }) = (e^{F(\alpha s) })_{11}  + (e^{F(\alpha s) })_{-1 -1} < 2,
\]     
and 
\[
\det\left( I - e^{F(\alpha s) } \right) = \left( 1-(e^{F(\alpha s) })_{11} \right) \left( 1-(e^{F(\alpha s) })_{-1 -1}  \right) - (e^{F(\alpha s) })_{1 -1} (e^{F(\alpha s) })_{-1 1} > 0.
\]
Using  Lemma \ref{lem: eigenposmat} we get the leading eigenvalue is less than 1. In other words, $e^{\kappa(\alpha s)}<1$  and implicitly $\kappa(\alpha s)<0$, completing  the proof.
\end{proof}

\begin{lem}
\label{expecI}
 Assume that Assumption \ref{Cramer} holds and denote by $\theta$ and $v$  for the Cram\'er number and its associated vector. Suppose that $0<\theta<\alpha$, then $\MAPE_{0,i}[I^{\theta/\alpha - 1}]$ and $\MAPEhs_{0,i}[I^{\theta/\alpha - 1 }]$ are finite, for $i=-1,1$.
\end{lem}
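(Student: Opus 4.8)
The plan is to prove the stronger, essentially model-free statement that for \emph{any} Markov additive process $((\xi,J),\mathbf{Q})$ on $\RR\times\{\pm1\}$ started from $(0,i)$ and any $p\in(0,1)$ one has $\mathbf{E}_{0,i}[I^{-p}]<\infty$, where $I=\int_0^\infty e^{\alpha\xi(t)}\,\dd t$. Since $\MAPP$ and $\MAPPhs$ are both MAPs, and since the hypothesis $0<\theta<\alpha$ gives $\theta/\alpha-1=-(1-\theta/\alpha)\in(-1,0)$ with $p:=1-\theta/\alpha\in(0,1)$, applying this bound under $\mathbf{Q}=\MAPP$ and under $\mathbf{Q}=\MAPPhs$ immediately yields the finiteness of both $\MAPE_{0,i}[I^{\theta/\alpha-1}]$ and $\MAPEhs_{0,i}[I^{\theta/\alpha-1}]$. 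I stress that Lemma~\ref{lem: funcexpmom} cannot be invoked here: it controls \emph{positive} moments $\mathbf{E}_{0,i}[I^s]$ for $s\in(0,1)$, whereas the exponent we must handle is negative, so a separate estimate is needed.

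The key device is a pathwise lower bound for $I$ through a first-passage time at a \emph{strictly negative} level. Set $\sigma=\inf\{t\ge0:\xi(t)<-1\}$. Because $\xi(0)=0$ and the paths are c\`adl\`ag, $\sigma>0$ $\mathbf{Q}_{0,i}$-\as, and $\xi(t)\ge-1$ for every $t<\sigma$, whence $e^{\alpha\xi(t)}\ge e^{-\alpha}$ on $[0,\sigma)$ and
\[
I\;\ge\;\int_0^{\sigma}e^{\alpha\xi(t)}\,\dd t\;\ge\;e^{-\alpha}\sigma .
\]
(This also survives killing, since $\sigma$ does not exceed the killing time.) Consequently $I^{-p}\le e^{\alpha p}\sigma^{-p}$, so it suffices to prove $\mathbf{E}_{0,i}[\sigma^{-p}]<\infty$. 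Writing $\mathbf{E}_{0,i}[\sigma^{-p}]=p\int_0^\infty x^{-p-1}\mathbf{Q}_{0,i}(\sigma\le x)\,\dd x$, the whole matter reduces to the tail estimate $\mathbf{Q}_{0,i}(\sigma\le x)\le Cx$ for small $x$: the integrand is then dominated near $0$ by $Cp\,x^{-p}$, which is integrable \emph{precisely because} $p<1$, while integrability at infinity holds for every $p>0$.

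It remains to establish $\mathbf{Q}_{0,i}(\sigma\le x)\le Cx$ for small $x$, which I expect to be the only delicate point. The idea is that, in a short time, $\xi$ can reach the level $-1$ only through a single large jump. Let $N_x$ count the events in $[0,x]$ that are either a jump of one of the driving L\'evy processes $\xi_{\pm1}$ of magnitude exceeding $1/2$, or a transition of the chain $J$ (carrying its additional jump $U_{ij}$); the total rate of such events is finite, so $N_x$ is stochastically dominated by a Poisson variable of mean $\lambda x$ and $\mathbf{Q}_{0,i}(N_x\ge1)\le\lambda x$. On the complement $\{N_x=0\}$ the process on $[0,x]$ is made up only of a continuous (drift and Gaussian) part and compensated small jumps, and a maximal/Doob inequality applied to its $L^2$-martingale part bounds $\mathbf{Q}_{0,i}(\inf_{t\le x}\xi(t)\le-1,\,N_x=0)$ by a term of order $x$ (in fact the continuous contribution is super-polynomially small in $x$). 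Adding the two bounds gives $\mathbf{Q}_{0,i}(\sigma\le x)\le Cx$, and taking $p=1-\theta/\alpha$ completes the argument. Note that the strict negativity of the level is essential: at level $0$ the first-passage time can vanish \as{} when $0$ is regular for $(-\infty,0)$ (for instance in the presence of a Gaussian component), which is exactly what the choice of level $-1$ avoids.
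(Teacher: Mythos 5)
Your argument is correct, but it takes a genuinely different route from the paper's. The paper stays entirely inside the Cram\'er/moment framework: the finiteness of $\MAPE_{0,i}[I^{\theta/\alpha-1}]$ is quoted directly from Proposition 3.6 of \cite{KKPW2014}, applied to $((\alpha\xi,J),\MAPP)$, whose Cram\'er number is $\theta/\alpha\in(0,1)$; for the dual it distinguishes the case $\kappa(0)=0$, where $((\alpha\xi,J),\MAPPhs)$ itself satisfies Cram\'er's condition with the same number so the first step applies, from the case $\kappa(0)<0$, where Lemma \ref{lem: funcexpmom} gives $\MAPEhs_{0,i}[I^{\theta/\alpha}]<\infty$ and a pathwise identity from the proof of Proposition 3.6 in \cite{KKPW2014}, combined with the Markov property, converts this positive moment into the desired negative one. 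You instead prove the stronger, assumption-free statement that \emph{any} MAP satisfies $\mathbf{E}_{0,i}[I^{-p}]<\infty$ for every $p\in(0,1)$, via the pathwise bound $I\ge e^{-\alpha}\sigma$ (with $\sigma$ the passage time below $-1$) and the small-time estimate $\Pro_{0,i}(\sigma\le x)\le Cx$: finitely many ``big'' events per unit time (large jumps, chain transitions with their $U_{ij}$'s) contribute $O(x)$, and the residual drift-plus-Gaussian-plus-compensated-small-jump part is controlled by the Gaussian tail and Doob's $L^2$ inequality, again $O(x)$; integrating $x^{-p-1}\cdot x$ near $0$ is then exactly where $p<1$ is used, so Cram\'er's condition enters your proof only through $p=1-\theta/\alpha\in(0,1)$, i.e.\ through $0<\theta<\alpha$. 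What your approach buys is a self-contained, elementary proof revealing that the lemma has nothing to do with Cram\'er's condition beyond the location of $\theta$; what the paper's approach buys is brevity and coherence with the eigenvalue/moment machinery ($\kappa$, Lemma \ref{lem: funcexpmom}) used throughout the rest of the proofs, and the sharpness of the restriction $p<1$ (visible in your tail bound, which is of exact order $x$ in the killed or spectrally two-sided case) explains why both proofs must stop short of $p=1$. Two details you should tighten: the events counted by $N_x$ must also include the (finite-rate) killing times of the driving L\'evy processes, since with the convention $\xi=-\infty$ after killing one has $\sigma\le\mathbf{k}$, so $\{\sigma\le x\}$ contains $\{\mathbf{k}\le x\}$, which is itself only $O(x)$; and on $\{N_x=0\}$ the identification of $\xi$ with its small-jump part should be phrased through the L\'evy--It\^o decomposition (the big-jump and small-jump parts are independent, and $\xi$ coincides with the latter on that event), rather than by conditioning, so that the Doob bound is applied to a genuine martingale law.
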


\begin{proof}
Since the  MAP $((\alpha \xi, J), \MAPP)$ has Cram\'er number $\theta/\alpha$, a direct application of Proposition 3.6 in \cite{KKPW2014} provides  that $\MAPE_{0,i}[I^{\theta/\alpha - 1}]$ is finite, for $i=-1,1$.

For the second expectation, we observe that the MAP $((\alpha \xi, J), \MAPPhs)$ satisfies $\kappa^\sharp(\theta/\alpha) = \kappa(0)$. If $\kappa(0)=0$, then $\theta/\alpha$ is the Cram\'er number of $((\alpha \xi, J), \MAPPhs)$ and by the first part of the proof  we have $\MAPEhs_{0,i}[I^{\theta/\alpha - 1 }]$ is finite, for $i=-1,1$. Now, if $\widehat{\kappa}^\sharp(\theta/\alpha)=\kappa (0)<0$, Lemma \ref{lem: funcexpmom} guarantees that $\MAPEhs_{0,i}[I^{\theta/\alpha}]$ is finite, for $i=-1,1$, which implies that for any $t>0$,
\begin{equation}\label{bounded}
\MAPEhs_{0,i}\left[ \left(\int_0^t e^{\alphaÊ\xi (u)} \dd u \right)^{\theta/\alpha}\right]<\infty \qquad \textrm{for}\quad i=-1,1.
\end{equation}
From the proof of Proposition 3.6 in \cite{KKPW2014}, we know that the following identity holds for all $s>0$ and $t\geq 0$,
\[
\left( \int_0^\infty e^{\alpha \xi(u)} \dd u \right)^{s} - \left( \int_t^\infty e^{\alpha \xi(u)} \dd u \right)^{s} = s \int_0^t e^{s \xi(u)} \left( \int_0^\infty e^{\alpha \xi(u+v) - \alpha \xi (v) }   \dd v \right)^{s - 1} \dd u.
\]
Hence taking expectations from both sides of the above identity, with $s=\theta/\alpha,$ and applying the Markov property, we obtain
\begin{align*}
\MAPEhs_{0,i} & \left[ \left( \int_0^\infty e^{\alpha \xi(u)} \dd u \right)^{\theta/\alpha} - \left( \int_t^\infty e^{\alpha \xi(u)} \dd u \right)^{\theta/\alpha} \right] \\
&= \dfrac{\theta}{\alpha} \int_0^t \sum_{j\in \{\pm 1\}} \MAPEhs_{0,i} \left[  e^{\theta \xi(u)}; J(u) = j\right] \MAPEhs_{0,j} \left[ I^{\theta/\alpha-1} \right] \dd u.
\end{align*}
Since $| |x|^s - |y|^s | \leq |x-y|^s$, for any $x,y\in \mathbb{R}$, and $0<s<1$, the left-hand side of the above equation is bounded by \eqref{bounded}. Since $\MAPEhs_{0,i} \left[  e^{\theta \xi(u)}; J(u) = i\right] \neq 0$, it follows that $\MAPEhs_{0,i}[I^{\theta/\alpha - 1 }]$ is finite, for $i=-1,1$, and the proof is now completed.

\end{proof}

\begin{proof}[Proof of Theorem \ref{thm: ext}] 
The proof relies on Theorem \ref{thm: exex}. Let $((\xi, J), \MAPPhs)$ the dual process of $((\xi, J), \MAPPs)$. By Proposition 4 in \cite{DK2015}, we have that $((\xi, J), \MAPPs)$ drifts to $\infty$ which imply that $((\xi, J), \MAPPhs)$ drifts to $-\infty$, and therefore $I<\infty$, $\MAPPhs$-\as

Let $\stPhs$ be the law of the $\alpha$-rssMp associated with $((\xi, J), \MAPPhs)$ via the Lamperti-Kiu transform. The process $(X, \stPhs)$ hits 0 continuously and in a finite time, $\stPhs$-\as Now, by Lemma \ref{prop: wd}, $(X, \stPs)$ and $(X, \stPhs)$ are in weak duality with respect to the measure $\mu(\dd  x)=|x|^{\alpha-1}\pi_{[x]}\dd x$, for $x\neq 0$. Since the law $\stPs$ is constructed via a Doob $h$-transform of the law $\stP$, with $h(x)= |x|^\theta v_{[x]}$  (the invariant function for the semigroup of $(X,\stP)$), it follows that $(X, \stP)$ and $(X, \stPhs)$ are in weak duality with respect to the measure
  $$\nu(dx) = \alpha \abs x^{\alpha-1-\theta} v_{[x]} \pi_{[x]} \dd x.$$

From Lemma \ref{expecI}, we know that $\MAPE_{0,i}[I^{\theta/\alpha - 1}]$ and $\MAPEhs_{0,i}[I^{\theta/\alpha - 1 }]$ are finite, for $i=-1,1$. Thus, for all $\lambda>0$, we necessarily  have
  $$\int_{\RRa} \stE_x[e^{-\lambda T_0}] \nu(\dd x) < \infty, \quad\textrm{and}\quad \int_{\RRa} \stEh_x^\sharp[e^{-\lambda T_0}] \nu(\dd x) < \infty. $$
Indeed, from the Lamperti-Kiu transform and Fubini's Theorem, we see
\begin{equation*}
\begin{split}
\int_{\RRa} \stE_x[e^{-\lambda T_0}] \nu( \dd  x) &=
\int_{\RRa} \alpha \abs x^{\alpha-1-\theta} v_{[x]} \pi_{[x]} \MAPE_{0,[x]}[e^{-\lambda \abs x^{\alpha } I}] \dd  x \\
&=
 \Gamma(1-\theta/\alpha)\lambda^{\theta/\alpha-1}\Big(v_{1} \pi_{1}  \MAPE_{0,1}[I^{\theta/\alpha-1}] + v_{-1} \pi_{-1}   \MAPE_{0,-1}[I^{\theta/\alpha-1}]\Big), 
\end{split}
\end{equation*}
which is finite. In a similar way, we can deduce  
  $$\int_{\RRa} \stEh_x^\sharp[e^{-\lambda T_0}] \nu(dx) < \infty.$$
Thus, the conditions of Theorem \ref{thm: exex} hold and we guarantee that there exists a unique recurrent extension of $(X, \stP)$ such that the $\lambda$-resolvent of its excursion measure $\mathbf{ n}$ satisfies
  \[
  \mathbf{ n}\left( \int_0^{T_0} e^{-\lambda t} f(X_t) \dd  t \right) = \frac{1}{\hat{C}_{\alpha, \theta}} \int_{-\infty}^\infty f(x) \stEhs_x[e^{-\lambda T_0}] \abs x^{\alpha-1-\theta} v_{[x]} \pi_{[x]} \dd  x, 
  \]
where 
  \[
  \hat{C}_{\alpha, \theta} = \frac{\Gamma(1-\theta/\alpha)}{\alpha} \Big(v_{1} \pi_{1}  \MAPEhs_{0,1}[I^{\theta/\alpha-1}] + v_{-1} \pi_{-1}   \MAPEhs_{0,-1}[I^{\theta/\alpha-1}]\Big).
  \]
Furthermore, $\mathbf{ n}(1-e^{-T_0}) = 1$. The characterization of the entrance law is obtained from the following series of identities
\begin{eqnarray*}
\mathbf{ n}\left( \int_0^{T_0} e^{-\lambda t} f(X_t) \dd t \right) 
&=&
\frac{1}{\hat{C}_{\alpha, \theta}} \left( \int_0^\infty f(x) \MAPEhs_{0,1}[e^{-\lambda x^\alpha I}] x^{\alpha - 1 - \theta} v_{1} \pi_{1}  \dd x \right. \\
& &
+ \left. \int_{-\infty}^0 f(x) \MAPEhs_{0,-1}[e^{-\lambda (-x)^\alpha I}] (-x)^{\alpha - 1 - \theta} v_{-1} \pi_{-1}  \dd x \right) \\
&=&
\frac{1}{\alpha \hat{C}_{\alpha, \theta}} \int_0^\infty e^{-\lambda t} t^{-\theta/\alpha} \left( v_{1} \pi_{1}  \MAPEhs_{0,1}\left[f\left( \frac{t^{1/\alpha}}{I^{1/\alpha}} \right) I^{\theta/\alpha - 1} \right] \right. \\
& &
\hspace{2.5cm}+ \left. v_{-1} \pi_{-1}  \MAPEhs_{0,-1}\left[ f\left( -\frac{t^{1/\alpha}}{I^{1/\alpha}} \right) I^{\theta/\alpha - 1} \right] \right) \dd t,
\end{eqnarray*}
where in the first identity we  used the Lamperti-Kiu transform and in the second identity, we  used Fubini's theorem and performed a change of variables.

To prove the converse, first we will verify that there exist a $\theta$ in $(0, \alpha)$ such that $\kappa(\theta)\leq 0$. By Lemma \ref{lem: excprop2}, we can deduce that there exist a $\theta \in(0,\alpha)$ such that potential of the measure $\mathbf{n}$ is given by
  \[\nu(\dd y) := \mathbf{n}\left( \int_0^{T_0} \mathbf{1}_{\{X_t\in \dd y\}} \right) = C_{\alpha, \alpha\theta}^{[y]} \abs{y}^{\alpha-1-\theta} \dd y, \quad  y\in \RRa,
  \]
where $C_{\alpha, \alpha\theta}^{1}, C_{\alpha, \alpha\theta}^{-1}$ are constants such that $0<C_{\alpha, \alpha\theta}^{1} + C_{\alpha, \alpha\theta}^{-1} < \infty$. Furthermore, $\nu$ is the unique invariant measure for $\overline{X}$  (the uniqueness holds up to a multiplicative constant). Hence, it follows that $\nu$ is an excessive measure for $(X, \stP)$. On the other hand, the Revuz measure of the additive functional $B$ defined by $B_t = \int_0^t \abs{X_s}^{-\alpha} \dd s$, for  $0\leq t < T_0$, relative to $\nu$, is given by
  \[
  \nu_B(\dd y) = C_{\alpha, \alpha\theta}^{[y]} \abs{y}^{-1-\theta} \dd y, \quad  y\in \RRa.
  \] 
Indeed, the Revuz measure of the additive functional $B$, relative to $\nu$, is such that
\[
\begin{split}
\nu_B(f) &= \lim_{t\to 0} \frac{1}{t} \stE_{\nu}\left[ \int_0^tf(X_s) \dd B_s, t<T_0 \right]=
\lim_{t\to 0} \int_{\RRa} \frac{1}{t} \stE_x  \left[ \int_0^tf(X_s) \abs{X_s}^{-\alpha} \dd s, t<T_0 \right] \nu(\dd x) \\
&=
\int_{\RRa} f(x)\abs{x}^{-\alpha} \nu(\dd x) =
\int_{\RRa} f(x) C_{\alpha,\alpha\theta}^{[x]} \abs{x}^{-1-\theta}  \dd x, 
\end{split}  
\]
for all positive bounded function $f$, see e.g. \cite{Revuz70-1}.

Since the Revuz measure is excessive for the  process $(X, \stP)$, from the Lamperti-Kiu transform it follows 
\begin{equation*}
\begin{split}
\int_{\RRa} \MAPE_{0,i}\left[ f(ye^{\xi(t)} J(t)) \right] C_{\alpha, \alpha\theta}^{[y]} \abs{y}^{-1-\theta} \dd y \leq  
\int_{\RRa}  f(y) C_{\alpha, \alpha\theta}^{[y]} \abs{y}^{-1-\theta} \dd y,
\end{split}
\end{equation*}
for all positive function $f$ and all $i=-1,1$. The left hand side of the latter inequality can be written as follows
\begin{equation*} \label{aux1}
\begin{split}
\int_{\RRa} &\MAPE_{0,i}\left[ e^{\theta \xi(t)}; J(t)=1 \right] C_{\alpha, \alpha\theta}^{[x]} f(x) \abs{x}^{-1-\theta} \dd x\\
&\hspace{4cm} + \int_{\RRa} \MAPE_{0,i}\left[ e^{\theta \xi(t)}; J(t)=-1 \right] C_{\alpha, \alpha\theta}^{[-x]} f(x) \abs{x}^{-1-\theta} \dd x, 
\end{split}
\end{equation*}
for  all $i=-1,1$.  Hence, 
\[
\begin{split}
\int_{\RRa}  C_{\alpha, \alpha\theta}^{[x]} f(x) \abs{x}^{-1-\theta} \dd x &\geq  \int_{\RRa} \left[ (e^{F(\theta) t})_{i1} C_{\alpha, \alpha\theta}^{[x]}  + (e^{F(\theta) t})_{i  -1} C_{\alpha, \alpha\theta}^{[-x]} \right] f(x) \abs{x}^{-1-\theta} \dd x,
\end{split}
\]
for $i=-1,1$. Since the latter inequality holds for all positive function $f$, we deduce
\begin{align*}
 \left(1-(e^{F(\theta) t})_{i1} -  (e^{F(\theta) t})_{i -1} \right) [ C_{\alpha, \alpha\theta}^1 + C_{\alpha, \alpha\theta}^{-1}]   \geq 0,
\end{align*}
for $i=-1,1$, which implies  the series of  inequalities 
\begin{align*} 
 &(e^{F(\theta) t})_{11}\leq 1, \quad (e^{F(\theta) t})_{1 -1}\leq 1-(e^{F(\theta) t})_{11}, \\   
 &(e^{F(\theta) t})_{-1-1}\leq 1, \quad (e^{F(\theta) t})_{-1 1}\leq 1-(e^{F(\theta) t})_{-1 -1}.   
\end{align*}
Putting all pieces together, we get
\[
\mathrm{tr}(e^{F(\theta) t}) = (e^{F(\theta) t})_{11}  + (e^{F(\theta) t})_{-1 -1} \leq 2
\]     
and 
\[
\det\left( I - e^{F(\theta) t} \right) = \left( 1-(e^{F(\theta) t})_{11} \right) \left( 1-(e^{F(\theta) t})_{-1 -1}  \right) - (e^{F(\theta) t})_{1 -1} (e^{F(\theta) t})_{-1 1} \geq 0.
\]
Thus the conditions of Lemma \ref{lem: eigenposmat} hold and therefore $e^{\kappa(\theta)}\leq 1$, i.e., $\kappa(\theta)\leq 0$.

Finally,  if $\kappa(\theta)<0$ then  Theorem \ref{thm: jumpext}  (which is proved below) implies that $(X,\stP)$ admits a recurrent extension that leaves 0 by a jump with jumping-in measure proportional to $ \eta_{\theta}(\dd x) = |x|^{-(\theta+1)} \dd x$, for $ x\neq 0$. In other words,  the measure $\mathbf{ m}=2^{-1}\mathbf{n} + 2^{-1}c_{\alpha, \theta} \stP_{\eta_\theta}$ is a self-similar excursion measure compatible with $(X,\stP)$ and with index of self-similarity $\theta/\alpha$; where $c_{\alpha, \theta}$ is a normalizing constant. Therefore, there exists a recurrent extension of $(X,\stP)$ with excursion measure $\mathbf{ m}$ that may leave 0 by a jump and continuously at the same time, which leads to a contradiction since any recurrent extension of $(X,\stP)$ either leaves 0 by a jump or continuously (see Lemma \ref{lem: excprop3}). Therefore, $\kappa(\theta)=0$, that is to say $\theta \in (0,\alpha)$ is a Cram\'er number.
\end{proof}

\begin{proof}[Proof of Theorem \ref{thm: jumpext}]
The equivalence of assertions (i) and (ii) follow from Lemma \ref{lem: funcexpmom} considering the MAP $ ((\xi,J), \MAPP)$ and $s=\beta/\alpha$.

Now, we prove the equivalence of the assertions (ii) and (iii).  Using again that $T_0$ under $\stP_x$ has the same law as $|x|^\alpha I$ under $\MAPP_{0,[x]}$, we obtain
\begin{equation*}
\begin{split}
\int_{\RRa} \stE_x[1-e^{-T_0}] \abs x^{-(\beta+1)} \dd x 
&= \MAPE_{0,1}\left[ \int_0^{\infty} (1-e^{-y I}) \frac{1}{\alpha} y^{-\beta/\alpha-1} \dd y \right] \\
&\quad  + \MAPE_{0,-1}\left[ \int_0^{\infty} (1-e^{-y I}) \frac{1}{\alpha} y^{-\beta/\alpha-1} \dd y \right] \\
&= \frac{\Gamma(1-\beta/\alpha)}{\beta} \Big( \MAPE_{0,1}[I^{\beta/\alpha}] +   \MAPE_{0,-1}[I^{\beta/\alpha}] \Big).
\end{split}
\end{equation*} 
Thus, if $\eta(\dd x) = \abs x^{-(\beta+1)} \dd x$ and $\mathbf{ n}^j$ is the pseudo-excursion measure $\mathbf{ n}^j = \stP_\eta$, then
  \[
  \mathbf{ n}^j(1-e^{-T_0}) = \int_{\RRa} \stE_x[1-e^{-T_0}] \abs x^{-(\beta+1)} \dd x 
  \]
is finite if and only if $\MAPE_{0,i}[I^{\beta/\alpha}] < \infty$, for $i=-1,1$. This proves the equivalence between assertions in (ii) and (iii). 

Finally, if (iii) holds, according with \cite{Blumenthal92} and Lemma \ref{lem: excprop4}, associated with the normalized excursion $\mathbf{ n}^{j'} = \stP_\eta$ where $\eta(\dd x) = b_{\alpha, \beta}^{[x]} \abs x^{-(\beta+1)}\dd x$ there exists a unique extension of the minimal process $(X, T_0)$ which is a self-similar Markov process and which leaves 0 by a jump, according to the jumping-in measure $\eta(\dd x) = b_{\alpha, \beta}^{[x]} \abs x^{-(\beta+1)}\dd x$, implying  (iv). Conversely, if (iv) holds the It\^o excursion measure of $\tilde{X}$ is $\mathbf{ n}^{j'} = \stP_\eta$, with $\eta(\dd x) = b_{\alpha, \beta}^{[x]} \abs x^{-(\beta+1)}\dd x$ and the statement in (iii) follows. This completes the proof.
\end{proof}

\bibliographystyle{abbrv}
\bibliography{ref3}

\end{document}